\newif\ifcover\covertrue\coverfalse
\newif\ifpic\pictrue
\def\subjclass#1{\par\medskip
\noindent\textbf{Mathematics Subject Classification (2010):} #1}
\def\keywords#1{\par\medskip
\noindent\textbf{Keywords.} #1}
\newcommand{\D}{{\mathbb D}}
\newcommand{\N}{{\mathbb N}}
\newcommand{\R}{{\mathbb R}}
\newcommand{\T}{{\mathbb T}}
\newcommand{\Z}{{\mathbb Z}}
\renewcommand{\S}{{\mathbb S}}
\newcommand\cB{{\mathcal B}}
\newcommand\cC{{\mathcal C}}
\newcommand\cE{{\mathcal E}}
\newcommand\cF{{\mathcal F}}
\newcommand\cG{{\mathcal G}}
\newcommand\cK{{\mathcal K}}
\newcommand\cU{{\mathcal U}}
\newcommand\bP{{\mathbb P}}
\newcommand\bS{{\mathbb S}}
\newcommand{\Crit}{\operatorname{Crit}}
\newcommand{\diam}{\operatorname{diam}}
\newtheorem{theorem}{Theorem}[section]
\newtheorem{prop}{Proposition}[section]
\newtheorem{corollary}{Corollary}[section]
\newtheorem{example}{Example}[section]
\newtheorem{lemma}{Lemma}[section]
\newtheorem{remark}{Remark}[section]
\newtheorem{definition}{Definition}[section]
\newenvironment{proof}{\noindent {\bf Proof.}}{ \hfill $\Box$\\ }
\newcommand{\orb}{\operatorname{orb}}
\def\aa{\omega}
\def\eps{\varepsilon}
\def\hloc{h_{\text{\tiny loc}}}
\def\htop{h_{\text{\tiny top}}}
\def\tl{translocal }
\def\Tl{Translocal }
\def\red{\color{black}}
\def\blue{\color{black}}
\title{On some aspects of local thermodynamical formalism}
\author{Andrzej Bi\'s
\thanks{Faculty of Mathematics and Computer Science, University of \L\'od\'z,
 Banacha 22, 90-238 \L\'od\'z, Poland;
 {\it andrzej.bis@wmii.uni.lodz.pl}}
\and
Henk Bruin
\thanks{Faculty of Mathematics, University of Vienna,
  		Oskar Morgensternplatz 1, 1090 Vienna, Austria; {\it henk.bruin@univie.ac.at}}
		}
\date{\today}
\begin{document}

\maketitle

\begin{abstract}
 In 2007, Ye \& Zhang introduced a version of local topological entropy. Since their entropy function is, as we show under mild conditions, constant for topologically transitive dynamical systems, we propose to adjust the notion in a way that does not neglect the initial transient part of an orbit.
 We investigate the properties of this ``transient'' version, which we call \tl entropy, and compute it in terms of Lyapunov exponents for various dynamical systems.
 We also investigate how this adjustment affects measure-theoretic local (Brin-Katok) entropy and local pressure functions, generalizing some partial variational principles of Ma \& Wen.
\end{abstract}

\subjclass{Primary: 37B40, 
Secondary: 37A35.
}
	\keywords{topological entropy, local entropy, Brin-Katok entropy, variational principle, topological pressure, local measure-theoretic pressure.}

\section{Introduction and motivation}\label{sec:intro}

Topological and measure-theoretic entropy can be interpreted as indications of complexity of
a map $f:X \to X$, but they provide a global view, without specifying on which subsets of $X$ the map is ``more complicated'' or ``less complicated''.
This motivated the study of local versions of entropy and of pressure, as we will discuss presently.

Topological entropy is related to measure-theoretic entropy via the variational principle:
If $f$ is a continuous map of a compact metric space $(X,d)$ (an assumption we will make throughout the paper), then $\htop(f) = \sup_\mu h_\mu(f)$, where the supremum is taken
over all $f$-invariant probability measures $\mu$ on $X$.
Measures $\mu$ such that $\htop(f) = h_{\mu}(f)$ are called {\em measures of maximal entropy} or {\em maximal measures}.
In thermodynamic formalism, one tries to maximize  the sum of entropy and energy (represented by an integral over a potential function $\phi:X \to \R$), called {\em free energy}
or {\em measure-theoretic pressure}.
The {\em topological pressure} can then be defined by the variational principle as
$P_{top}(f) = \sup_\mu \{ h_\mu + \int_X \phi \, d\mu\}$, see e.g.\ \cite{Rue04,walters73}.

Local versions of topological entropy were used by
Bowen \cite{Bow} to get an upper estimate of the difference
between $h_{\mu}(f)$ and the entropy of a partition  with diameter less than $\eps$ and by Misiurewicz \cite{Mis}, who proved that vanishing of his local entropy implies existence of {\blue a} maximal measure.
Different versions were used by Newhouse \cite{N89}, and by Buzzi \& Ruette \cite{Buz97, BR}, to address questions about the (unique) existence of a measure of maximal entropy. All these notions rely on {\em Bowen balls} (also called
{\em dynamical balls}):
{\red
$$
B_n(z;\eps) = \{ z' \in X : d(f^j(z), f^j(z') < \eps \text{ for all } 0 \leq j < n\}.
$$
}
A local view on measure-theoretic entropy goes back to at least Brin \& Katok \cite{BK83}.
In particular, Brin-Katok {\blue local} entropy $h_\mu(f,x)$ (see~\eqref{eq:BK} in Section~\ref{sec:BK} for the definition), when integrated over the space yields the global measure-theoretic entropy:
$$
\int h_\mu(f,x) \, d\mu(x) = h_\mu(f).
$$
Variational principles have been derived for this local version as well, see e.g.\ \cite{FH12}.

\subsection{The Ye \& Zhang entropy function and \tl entropy.}
In 2007, Ye \& Zhang \cite{YZ07} introduced a new version of local entropy,
which they called {\em entropy function} $\htop:X \to [0,\infty]$.

Their definition, see~\eqref{eq:htop} below, is the starting point of our paper, but one soon discovers that under mild conditions the entropy function is constant: {\blue for any $z\in X$ one has} $\htop(z) \equiv \htop(f)$ for very natural dynamical systems, see  {\blue Corollary~\ref{cor:hconstant}}.
 It is not straightforward to find maps with non-constant entropy functions, see Section~\ref{sec:hconst}.
The reason is that in the Ye \& Zhang approach, only the limit behaviour of
the orbit of $z$ counts, not the ``transient'' behaviour at finite time scales.
The entropy function does not see if entropy is created ``immediately'' in small neighbourhoods of $z$, or only at very distant time scales.

For this reason, we propose an adjustment to Ye \& Zhang's definition,
which we call the {\em \tl entropy function} and denote it as $h_{\aa}$,
that takes into account the time scale needed for Bowen balls to reach unit size. As a result, $h_{\aa}(z)$ depends non-trivially on $z \in X$, and Lyapunov exponents at $z$
play a central role. This interaction is most transparent in smooth one-dimensional maps,
see Theorem~\ref{thm:circle},
where each point has at most one Lyapunov exponent (because there is only one direction in which derivatives can be taken).
As representatives of higher-dimensional maps, we study toral automorphisms $F_A: \T^d \to \T^d$ based on a $d \times d$ matrix $A$.
Theorem~\ref{thm:toral} shows that its \tl entropy function is determined by the Lyapunov exponents, i.e., the logarithms of the eigenvalues of $A$.

In the second half of the paper, we turn to the local approach to pressure and study the \tl version of pressure.
 Ma \& Wen in \cite{MW} obtained a partial variational principle for topological entropy. They noticed
(see Theorem~\ref{thm:MW}) the relations between topological entropy of a map and its upper and lower measure-theoretic entropies.
Using a Carath\'eodory-like construction, elaborated by Pesin \cite{Pes}, we define a dimensional type of topological pressure $P_Z(f, \phi)$ for a continuous map $f:X \to X$ with potential $\phi$, restricted to subsets $Z \subset X$. Fixing a Borel probability measure $\mu$ on $X$ we generalize the Ma \& Wen result by means of upper and lower local measure-theoretic pressures, see Theorem~\ref{thm:aaMW}.
In Theorem~\ref{thm:tlMW} we show the same result for \tl version of local
measure-theoretic pressures.

\subsection{Structure of the paper}
Our paper is organized as follows. In Section~\ref{sec:localentro}, we introduce the entropy function by Ye \& Zhang, and study when it is constant and when not.
In Section~\ref{sec:aalocal}, we give our adaptation of the entropy function, and give its basic and more advanced properties in the remaining subsections of Section~\ref{sec:localentro}.

Section~\ref{sec:localpressure} is devoted to local measure-theoretic approach to pressure.
Section~\ref{sec:BK} deals with measure-theoretic versions of local entropy, starting with the Brin-Katok entropy. We discuss (local) topological pressure in Section~\ref{sec:toppressure} and give a variational principle in Section~\ref{sec:locpressure}.
The variational principle of our \tl adaptation is covered in Section~\ref{sec:aapressure}.
We finish the paper in Section~\ref{sec:discussion} with a discussion and comparison of various notions of local entropy in the literature.

\section{Local topological entropy in sense of Ye \& Zhang}\label{sec:localentro}

Let $f:X \to X$ be a continuous map on a compact metric space $(X,d)$.
A set $A$ is called {\em $(n,\eps)$-separated} if $d(f^j(x),f^j(y)) > \eps$ for some $0 \leq j < n$ and all distinct $x, y \in A$.
For a closed subset $K \subset X$, we define the topological entropy\footnote{This is in line with the Bowen-Dinaburg definition \cite{Bow71, Din70}, used in e.g.\ \cite{PP84,TV99,TV03}, who were eventually interested in the multifractal properties of a measure-theoretic version of local entropy, in the line of the Brin-Katok entropy which we discuss in Section~\ref{sec:BK}.}
$\htop(f,K)$
restricted to $K$ as the limit of exponential growth rates of $(n,\eps)$-separated subsets of $K$ as $\eps \to 0$, that is
\begin{equation}\label{eq:Bowen}
\htop(f,K) = \lim_{\eps \to  0}\limsup_{n\to\infty} \frac1n \log S(n,\eps,K),
\end{equation}
where $S(n,\eps,K)$ is the maximal cardinality of $(n,\eps)$-separated subsets of $K$.

{\blue  Can one modify~\eqref{eq:Bowen}
  into a pointwise definition of topological entropy by shrinking $K$  to a point?} One way of doing this is
\begin{equation}\label{eq:notle}
\lim_{\delta \to 0} \inf_{K \in \cK_\delta(x)} \htop(f,K),
\end{equation}
where {\blue $\cK_\delta(x)$} is the collection of closed neighborhoods of $x$ of diameter $\leq \delta$.
A point $x \in X$ is an {\em entropy point} if there are arbitrarily small closed neighbourhoods $K$ such that $\htop(f,K) > 0$ and a {\em full entropy point} if in addition $\htop(f,K) = \htop(f)$, see  \cite[Section 2]{YZ07}. Hence, only maps with positive entropy can have full entropy points.

{\blue A map $f:X \to X$ is {\em topologically exact} or {\em locally eventually onto} if for every open $U \subset X$, there is $N \in \N$ such that $f^N(U) = X$.}
However, 
if $f$ is locally eventually onto, then there is $N_K$ such that $f^{N_K}(K) = X$.
Notice that $(n-N_k,\eps)$-separated points $y_1,y_2 \in f^{N_K}(K)$ determine points $x_1,x_2 \in K$, with $y_1=f^{N_K}(x_1)$ and $y_2=f^{N_K}(x_2)$, such that $x_1, x_2$ are $(n,\eps)$-separated. Therefore {\red $S(n,\eps,K)\ge S(n-N_k,\eps, f^{N_K}(K))$.}
Thus the quantity in~\eqref{eq:notle} is constant and equal to $\htop(f)$.

Instead, in~\cite[Definition 4.1]{YZ07} the {\em local entropy function} is defined as {\blue
\begin{equation}\label{eq:htop}
\htop(x) = \lim_{\eps \to 0} \htop(x,\eps), \qquad \text{for~~~}
\htop(x,\eps) = \lim_{\delta \to 0} \inf_{K_x \in \cK_{\delta}(x)} \limsup_{n \to \infty} \frac1n \log S(n,\eps,K_x).
\end{equation}
}
This differs from~\eqref{eq:notle} in the order in which the limits $\eps \to 0$ and $\delta \to 0$ are taken.
In general, $\htop(x) = \htop(f(x))$, {\blue for any point $x \in X$}, so the local entropy is constant along forward orbits, see \cite[Proposition 4.4(1)]{YZ07}.
In fact, we have {\blue the following. }

\begin{lemma}
 The entropy function takes its minimum at transitive points, i.e., points with a dense orbit.
\end{lemma}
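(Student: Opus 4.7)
The plan is to exploit two facts: the forward invariance $\htop(x)=\htop(f(x))$ quoted from \cite[Prop.\ 4.4(1)]{YZ07}, and the denseness of the forward orbit of a transitive point. Together these should let me relocate any almost-optimal closed neighbourhood witnessing $\htop(y,\eps)$ to a neighbourhood of some forward iterate of the transitive point $x$, with no increase in the separated-set counts. Since the entropy function is orbit-invariant, this relocation transfers the upper bound from $y$ to $x$ itself.

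In detail, fix a transitive point $x$ and an arbitrary $y \in X$; I will show $\htop(x) \leq \htop(y)$, which gives the claim. Given $\eps>0$, choose a closed neighbourhood $K$ of $y$, with $y$ in its interior, such that
$$\limsup_{n\to\infty} \tfrac{1}{n}\log S(n,\eps,K) \leq \htop(y,\eps)+\eps.$$
By density of the orbit of $x$, there exists $N\in\N$ with $f^N(x)$ in the interior of $K$. Pick any closed neighbourhood $K'$ of $f^N(x)$ with $K'\subseteq K$. Since every $(n,\eps)$-separated subset of $K'$ is automatically $(n,\eps)$-separated in $K$, we have $S(n,\eps,K')\leq S(n,\eps,K)$, and hence $\htop(f^N(x),\eps)\leq \htop(y,\eps)+\eps$. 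Letting $\eps\to 0$ yields $\htop(f^N(x))\leq \htop(y)$, and iterating the forward invariance $N$ times gives $\htop(x)=\htop(f^N(x))$, completing the proof.

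The only delicate point is the iteration of the forward invariance, which follows by induction on $N$ from the single-step identity $\htop(x)=\htop(f(x))$; beyond that, the argument is essentially bookkeeping with monotonicity of $S(n,\eps,\cdot)$ in its last argument. There is no genuine obstacle here, which is consistent with the sense that this lemma is a stepping-stone toward the subsequent theorem on when the entropy function is globally constant: once transitive points realise the minimum, showing that the minimum equals the maximum $\htop(f)$ (under suitable hypotheses) will give constancy for free.
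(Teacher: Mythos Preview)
Your approach follows the same idea as the paper's, but there is a gap in the step ``Letting $\eps \to 0$ yields $\htop(f^N(x)) \le \htop(y)$'': the integer $N$ depends on the neighbourhood $K$, which was chosen to be near-optimal for $\htop(y,\eps)$ and hence depends on $\eps$. What you actually have is, for each $\eps$, some $N_\eps$ with $\htop(f^{N_\eps}(x),\eps)\le\htop(y,\eps)+\eps$. The forward invariance $\htop(x)=\htop(f^{N_\eps}(x))$ you then invoke is an identity between the $\eps\to 0$ limits; it does not convert a bound on $\htop(f^{N_\eps}(x),\eps)$ at a single scale $\eps$ into a bound on $\htop(x)$.

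The easy repair is to use forward invariance at the $\eps$-level, namely $\htop(x,\eps)\le\htop(f^N(x),\eps)$, which is exactly what underlies the proof of $\htop(x)=\htop(f(x))$ in \cite{YZ07}; then $\htop(x,\eps)\le\htop(y,\eps)+\eps$ for every $\eps$, and the result follows. Equivalently---and this is what the paper does---instead of passing to a sub-neighbourhood $K'\subset K$ around $f^N(x)$ and then appealing to forward invariance, use continuity of $f^N$ to pull $K$ back to a closed neighbourhood $K_x$ of $x$ itself with $f^N(K_x)\subset K$, and compare separated-set growth rates in $K_x$ and $K$ directly. This keeps the whole estimate at a fixed $\eps$ and at a neighbourhood of $x$, so no separate invocation of forward invariance is needed.
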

{\blue
\begin{proof}
Fix $\delta >0.$ Choose a point $x \in X$ with a dense orbit and
 a point $y \in X$ with  $K_y \in \cK_{\delta}(y).$ Then, there exists $n_0\geq 0$  such that
 $f^{n_0}(x) \in K_y$ and  a set  $K_x \in \cK_{\delta}(x)$
 such that  $f^{n_0}(K_x) \subset K_y$. For any $(n,\eps)$-separated points $y_1,y_2 \in f^{n_0}(K_x)$ there exist $(n+n_0,\eps)$-separated points $x_1,x_2 \in K_x$
 (i.e. $y_1= f^{n_0}(x_1)$ and $y_2 = f^{n_0}(x_2)$). Thus,
 $$S(n+n_0,\eps,K_x)\le S(n,\eps,f^{n_0}(K_x)) \le S(n,\eps,K_y)$$
 and taking limsup with $n \to \infty$ we get
 $$\limsup_{n \to \infty} \frac{n +n_0}{n} \cdot \frac{1}{n+n_0} \log S(n+n_0,\eps,K_x) \le \limsup_{n \to \infty}  \frac{1}{n} \log S(n,\eps,K_y).$$
 Applying~\eqref{eq:htop} and taking the infimum over $K \in \cK_\delta$ on both sides, we obtain
 $$
 \htop(x,\eps)  =\inf_{K_x \in \cK_{\delta}(x)}\limsup_{n \to \infty}  \frac{1}{n} \log S(n+n_0,\eps,K_x) \le \inf_{K_y \in \cK_{\delta}(y)}\limsup_{n \to \infty}  \frac{1}{n} \log S(n,\eps,K_y)
 =\htop(y,\eps),
 $$
 and hence $\htop(x) \leq \htop(y)$.
\end{proof}
 }

\subsection{When is the entropy function non-constant?}\label{sec:hconst}

One can argue the merits of local entropy function that is equal to $\htop(f)$ everywhere.
In this case, the behavior near a single orbit  suffices to estimate the topological entropy, but on the other hand, such an entropy function cannot indicate any inhomogeneity in where in the system, and how rapidly, chaos is created.

The next theorem shows that the Ye \& Zhang local entropy function is indeed constant
under mild assumptions.
A version of this theorem, for much more specific dynamical systems, can be found in \cite{BV19}.

Topological exactness gives a simple sufficient condition for the entropy function to be constant.

\begin{lemma}\label{lem:leo}
 If $f:X \to X$ is topologically exact {\blue map, }then $\htop(z) \equiv\htop(f)$ for any $z\in X$.
\end{lemma}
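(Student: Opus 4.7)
The plan is to prove the two inequalities $\htop(z) \le \htop(f)$ and $\htop(z) \ge \htop(f)$ separately. The first is essentially tautological from the definition: since $X$ itself is a closed neighbourhood of $z$, we have
$$
\htop(z,\eps) \leq \limsup_{n\to\infty} \frac{1}{n}\log S(n,\eps,X),
$$
and letting $\eps\to 0$ gives $\htop(z) \le \htop(f)$.

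For the reverse inequality, I would exploit topological exactness via a standard ``pulling back separated sets'' argument. Fix any closed neighbourhood $K$ of $z$. Since $K$ contains an open neighbourhood $U$ of $z$, topological exactness supplies $N = N_K \in \N$ with $f^N(U) = X$, hence $f^N(K) = X$. Now given any $(n,\eps)$-separated set $E \subset X$, choose for each $e \in E$ a preimage $y_e \in K$ with $f^N(y_e) = e$. The resulting set $\{y_e : e \in E\} \subset K$ has cardinality $|E|$ and is $(n+N,\eps)$-separated: if $e \ne e'$, some $0 \le j < n$ witnesses $d(f^j(e), f^j(e')) \ge \eps$, and then $j + N < n + N$ witnesses the separation of $y_e, y_{e'}$. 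Hence $S(n+N,\eps,K) \ge S(n,\eps,X)$.

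Taking $\limsup_n \frac{1}{n}\log(\cdot)$ on both sides, the shift by the constant $N$ is absorbed, so
$$
\limsup_{n\to\infty} \frac{1}{n}\log S(n,\eps,K) \;\ge\; \limsup_{n\to\infty} \frac{1}{n}\log S(n,\eps,X).
$$
Taking the infimum over closed neighbourhoods $K$ of $z$ on the left (the right-hand side is independent of $K$) yields $\htop(z,\eps) \ge \limsup_n \frac{1}{n}\log S(n,\eps,X)$, and finally $\eps \to 0$ gives $\htop(z) \ge \htop(f)$.

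There is no real obstacle here; the only point to handle carefully is the bookkeeping showing that the shift-by-$N$ in the limsup does not affect the exponential growth rate, and that the lifted set $\{y_e\}$ remains separated at the extended time scale $n+N$ (which follows because separation at some time $j < n$ for the images $e = f^N(y_e)$ automatically gives separation at time $j+N < n+N$ for the $y_e$ themselves). In particular, the argument is really the same observation already noted in the excerpt just after \eqref{eq:notle}.
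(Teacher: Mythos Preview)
Your proof is correct and is essentially the same as the paper's. The paper compresses your pulling-back argument into the single inequality $S(n,\eps,K) \ge S(n-N,\eps,X)$ (your $S(n+N,\eps,K)\ge S(n,\eps,X)$ re-indexed) and does not bother to write out the trivial direction $\htop(z)\le\htop(f)$; otherwise the reasoning is identical.
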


\begin{proof}
 Given a closed neighbourhood $K \owns z$, take $N$ such that $
 f^N(K) \supset f^N(\mathring{K}) = X$, where $\mathring{K}$ indicates the interior {\blue of $K.$ }
 Then $\frac1n \log S(n,\eps,K) \geq \frac1n \log S(n-N, \eps, X) =
 \frac{n-N}{n} \frac{1}{n-N} \log S(n-N,\eps, X)$.
 So after taking limits $n \to \infty$ and $\eps \to 0$, we find $\htop(z) \geq \htop(f)$.
\end{proof}

\medskip
{\red  Ye \& Zhang \cite[Theorem 3.7]{YZ07} prove, based on a lemma by Katok, that
for every ergodic $f$-invariant measure $\mu$:
\begin{equation}\label{YZ3.7}
 \lim_{\eps\to 0} \inf\{ \limsup_{n\to\infty} \frac1n \log S(n,\eps,K) :
 K \text{ is a Borel set with } \mu(K) > 0 \} \geq h_\mu(f).
\end{equation}
This allows us to derive the following corollary:

\begin{corollary}\label{cor:hconstant}
 Let $f:X \to X$ be a continuous topological transitive map defined on a compact metric space $(X,d)$, and assume that it has at least one measure of maximal entropy.
 If $\htop(f) = 0$ or if each measure of maximal entropy is fully supported, then $\htop(x) = \htop(f)$ for every $x \in X$.
\end{corollary}
}

{\blue
\begin{proof}
If $\htop(f) = 0,$ then of course the corollary is true.
Since all measure of maximal entropy are fully supported, certainly there is a fully supported {\bf ergodic}
$f$-invariant measure $\mu$ of maximal entropy.
Therefore $\mu(K) > 0$ for every compact neighbourhood of every $z \in X$. Fix $\delta>0$ and $x\in X.$ Since
\begin{align*}
 \inf & \{ \limsup_{n\to\infty} \frac1n \log S(n,\eps,K_x) :
 K_x  \in \cK_{\delta}(x)~and~ \mu(K_x) > 0 \} \\
 & \ge \inf\{ \limsup_{n\to\infty} \frac1n \log S(n,\eps,K) :
 K \text{ is a Borel set with } \mu(K) > 0 \},
 \end{align*}
we have
 \begin{align*}
 \lim_{\delta \to 0} & \inf\{ \limsup_{n\to\infty} \frac1n \log S(n,\eps,K_x) :
 K_x \in \cK_{\delta}(x)~and~ \mu(K_x) > 0 \} \\
 & \ge  \inf\{ \limsup_{n\to\infty} \frac1n \log S(n,\eps,K) :
 K \text{ is a Borel set with } \mu(K) > 0 \}.
\end{align*}
Taking limit with $\eps \to 0$ we get
\begin{eqnarray*}
\htop(x) &=& \lim_{\eps \to 0}\lim_{\delta \to 0} \inf\{ \limsup_{n\to\infty} \frac1n \log S(n,\eps,K_x) :
 K_x \in \cK_{\delta}(x)~and~ \mu(K_x) > 0 \} \\
 &\ge& \lim_{\eps \to 0}  \inf\{ \limsup_{n\to\infty} \frac1n \log S(n,\eps,K) :  K \text{ is a Borel set with } \mu(K) > 0 \}.
 \end{eqnarray*}
Formula~\eqref{YZ3.7} and the assumption that $\mu$ is a measure of maximal entropy yield
 $$
   \htop(f) \geq \htop(x) \geq h_\mu(f) = \htop(f).
$$
\end{proof}
}

The assumption that the measures of maximal entropy are fully supported is not entirely automatic, although it is conjectured that {\bf on manifolds} the measure of maximal entropy (if existent)
is automatically fully supported if the map is topologically transitive, see \cite{Y11} %\cite{BCS23}
for results supporting this conjecture. In the context of subshifts $(X,\sigma)$ of positive entropy (so $X$ is a Cantor set, not a manifold), Kwietniak et al.~\cite{KOR14}
presented a counter-example with all of its measures of maximal entropy supported on a proper subsets of $X$.
Loosely inspired by their construction, we have the following topologically transitive (and in fact coded)
subshift with a non-constant entropy function.

\begin{example}
 We construct a coded subshift of $\{0,1,2\}^\Z$ on which the local entropy function is not constant.
 Let $\{w_k\}_{k \in \N}$ be an enumeration of all words in $\bigcup_{n \geq 1}\{0,1\}^n$ such that $|w_k| \leq |w_{k+1}|$ for all $k \in \N$.
Let $\cC = \{ C_k \}_{k \in \N}$ be the collection of code words, where
 $$
 C_k =  20^{(10+k)!} w_k 0^{(10+k)!} 2.
 $$
 A {\em coded shift} $X_{\cC}$ is the shift of which the language consists of subwords of free concatenations of code words, see~\cite{LM} and \cite{Pav18}.
 The structure of the code words in $\cC$ implies that $22$ is a {\em synchronizing word}
 (i.e., if $u22$ and $22v$ are both allowed words in $X_{\cC}$, then so is $u22v$), but more importantly, $\htop(X_{\cC},\sigma) \geq \log 2$, because the full shift on $\{0,1\}$ is a subshift of $X_{\cC}$.

 Now consider the cylinder set $[2.2] = \{ z \in \{ 0,1,2 \}^{\Z} : z_{-1}=z_0=2\}$ and let $z \in [2.2]$ be such that $\orb_\sigma(z)$
 is dense in $X_{\cC}$ and $K \subset [2.2]$ is a closed neighbourhood of $z$.
 From the theory of coded shifts, see \cite[Section 3.3]{Bruin23},
 we can compute the exponential growth rate of the number of centered words $x$ of length $2n+1$ and with $x_{-1} = x_0 = 2$.
 Namely, this is the unique positive solution $h$ of $\sum_k e^{-h |C_k|} = 1$, and it is clear that $h < \log 2$.
 It follows that $\htop(z) \leq h < \htop(X_{\cC},\sigma)$.
In this case, work of Pavlov \cite[Theorem 1.1]{Pav18} implies that
its measures of maximal entropy are supported on $\overline{\orb_\sigma(z)}\setminus \orb_\sigma(z)$, i.e., $\{ 0, 1\}^\Z$.
On the other hand, for every closed neighbourhood $K$ of $z$, there is $n$ such that
$\sigma^n(K) \supset [2.2]$, and therefore
{\blue $\htop(z) =  \log h < \htop(X_{\cC},\sigma) = \log 2$.}
\end{example}

Without the assumption of transitivity, maps with non-constant entropy are not hard to find.
A simple example of a map where $\htop(z)$
is not constant, and in fact, $\sup_z \htop(z) = \infty$, is given in Figure~\ref{fig:infent}, see \cite{H20} for H\"older continuous maps of this type.
Maps of this type (i.e., with infinitely many transitive components, each with its own value for local entropy, and infinite entropy altogether), are $C^0$-generic on any manifold of dimension $\geq 2$, see \cite{FHT21}.
For such examples, the values $E$ that local entropy assumes forms a countable subset of $[0,\infty)$.
With some more work
(but giving up on genericity) one can find smooth examples where
$E$ is dense in $[0,\infty)$, as well.

\ifpic
\begin{figure}[h]
\begin{center}
\begin{tikzpicture}[scale=0.5]
\draw[-] (16,0) -- (0,0) -- (0,16) -- (16,16) -- (16,0);
\draw[-,dashed] (0,0) -- (16,16);
\draw[-,dashed] (8,0) -- (8,16); \draw[-,dashed] (0,8) -- (16,8);
\draw[-,dashed] (4,0) -- (4,8); \draw[-,dashed] (0,4) -- (8,4);
\draw[-,dashed] (2,0) -- (2,4); \draw[-,dashed] (0,2) -- (4,2);
\draw[-,dashed] (1,0) -- (1,2); \draw[-,dashed] (0,1) -- (2,1);
\draw[-,dashed] (0.5,0) -- (0.5,1); \draw[-,dashed] (0,0.5) -- (1,0.5);
\draw[-,thick] (16,16) -- (14.13,8) -- (10.66,16) -- (8,8) --
(7.2,4)--(6.4,8)--(5.6,4)--(4.8,8)--(4,4) -- (3.72,2)--(3.44,4) -- (3.16,2)--(2.88,4)--(2.59,2)--(2.29,4)--(2,2)--(1.88,1)--(1.77,2)--(1.66,1)--(1.55,2)--(1.44,1)--(1.33,2)--(1.22,1)--(1.11,2)--(1,1);
\node at (0.5,0.5) {};
 \end{tikzpicture}
 \caption{An infinite entropy map where $\htop(x) = \log(2n+1)$ for $x \in (2^{-n}, 2^{1-n}]$ and $\htop(0)= \infty$.}\label{fig:infent}
\end{center}
\end{figure}
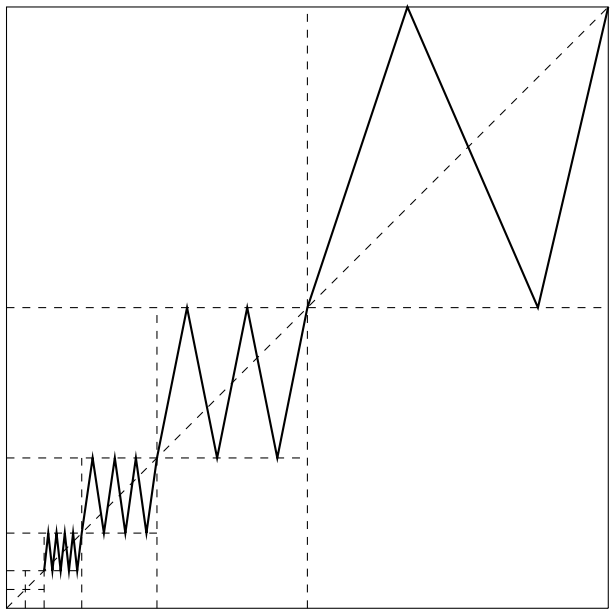
\fi

\subsection{\Tl entropy}\label{sec:aalocal}

\begin{definition}
For $\aa \geq 0$ and $z \in X$, the {\em upper \tl entropy} at $z \in X$ is
{\blue
$$
 \overline h_{\aa}(z) =  \lim_{\eps \to 0} \limsup_{n\to\infty} \frac1n \log S(n,\eps, \overline{B(z, e^{-\aa n})}).
$$
 }
The {\em lower \tl entropy} $\underline h_{\aa}(z)$ at $z \in X$ is the same with $\liminf$ instead of $\limsup$. If the two are the same, then
$h_{\aa}(z) := \overline h_{\aa}(z) = \underline h_{\aa}(z)$
is the {\em \tl entropy} at $z$.
\end{definition}

Clearly $\overline h_{\aa}(z)$ is decreasing in $\aa$.
Compared to the quantity~\eqref{eq:notle}, $\htop(x)$ is obtained by swapping the limit $\lim_{\eps\to 0}$ and $\inf_K$.

As a result {\blue we get the following. }

\begin{lemma}\label{lem:Fatou}
Let $f:X \to X$ be a continuous map on a compact metric space.
For all $x \in X$ and $\aa \geq 0$, we have  {\blue
\begin{eqnarray}\label{eq:Fatou}
0 &=&  \lim_{\eps \to 0} \liminf_{n\to\infty} \inf \{   \frac1n \log S(n,\eps,K_x): \delta>0,~ K_x \in \cK_{\delta}(x)\}  \nonumber  \\
&\leq&
\underline h_{\aa}(x)
\ \leq \ \overline h_{\aa}(x) \\[1mm]
&\leq&
\inf\{h_{top}(f,K):  \delta>0,~ K_x \in \cK_{\delta}(x)\}
\leq \ \htop(x).
\nonumber
\end{eqnarray}
}
\end{lemma}

\begin{proof}
If $K$ is sufficiently small compared to $n$, then
$S(n,\eps,K) = 1$, so the first equality in~\eqref{eq:Fatou} is obvious.
 Since $S(n,\eps,K) \geq 1$, we have $\frac1n \log S(n,\eps,K) \geq 0$ for all $\eps > 0$, $n \geq 1$ and closed neighbourhoods $K$ of $x$. Also,
 $\frac1n \log S(n,\eps,K)$ is  decreasing in $n$ and in $\eps$, but increasing in $K$. {\blue
 Now we prove the second inequality  in~\eqref{eq:Fatou}. To this aim we fix $n \in \mathbb{N}$ and $\eps >0,$ then there exists $\delta_n < 2e^{-\aa n}$ such that for any $ K_x \in \cK_{\delta_n}(x)$
 the inclusion $K_x \subset B(x, e^{-\aa n})$ holds. Therefore, for any $n \in \N$ we get
 $$\frac{1}{n} \log S(n,\eps, K_x) \le \frac{1}{n} \log S(n,\eps, B(x, e^{-\aa n})).$$
 Taking infimum over $\delta_n>0$ and $K_x \in \cK_{\delta_n}(x)$ we obtain
 $$
 \inf \left\{\frac{1}{n} \log S(n,\eps, K_x) : \delta_n>0 \text{ and } K_x \in \cK_{\delta_n}(x) \right\} \le \frac{1}{n} \log S(n,\eps, B(x, e^{-\aa n})).
 $$
 Finally, taking liminf $n \to \infty$ and limit  $\eps \to 0$ we get the second inequality in the statement of the lemma. The  third inequality is obvious.
 
 It remains to show the last inequality in~\eqref{eq:Fatou}. Fix $\delta>0$ and choose $K \in \cK_{\delta}(x),$ then there exists $n_{K,\delta} \in \mathbb{N}$ such that for
 any for any $n \ge n_{K,\delta}$ the inequality $S(n,\eps, B(x, e^{-\aa n}) \le S(n,\eps,K)$ holds. Therefore, for any $n \ge n_{K,\delta}$
 $$
 \frac{1}{n} \log S(n,\eps, B(x, e^{-\aa n}) \le \frac{1}{n} \log S(n,\eps, K).
 $$
 Thus, taking infimum over  $K \in \cK_{\delta}(x),$ for large enough $n$ we get
 $$
  \frac{1}{n} \log S(n,\eps, B(x, e^{-\aa n}) \le \inf \{\frac{1}{n} \log S(n,\eps, K): ~K \in \cK_{\delta}(x)\},
$$
so
$$
\limsup_{n \to \infty}\frac{1}{n} \log S(n,\eps, B(x, e^{-\aa n}) \le \inf \{\limsup_{n \to \infty}\frac{1}{n} \log S(n,\eps, K): ~K \in \cK_{\delta}(x)\}
$$
and
$$
\limsup_{n \to \infty}\frac{1}{n} \log S(n,\eps, B(x, e^{-\aa n}) \le \lim_{\delta \to 0}\inf \{\limsup_{n \to \infty}\frac{1}{n} \log S(n,\eps, K): ~K \in \cK_{\delta}(x)\}.
$$
 Finally, taking limit $\eps\to 0$ we get the last inequality in~\eqref{eq:Fatou}.  }
\end{proof}

The next example explores the role of Lyapunov exponents in the \tl entropy function.
For one-dimensional maps, the
Lyapunov exponent is defined as
$$
\lambda(x) = \lim_{n\to\infty}\frac1n \log| Df^n(x)|,
$$
provided the derivatives and the limit exist.

\begin{example}\label{exam:log3} (i)
 Let $f:\S^1 \to\S^1$, $x \mapsto 3x \mod 1$.
 For any $x \in \S^1$ and $K = [x-\delta,x+\delta]$,
 we have $f^{n_0}(K) = \S^1$ for $n_0 \geq -\frac{\log 2\delta}{\log 3}$, and $S(n,\eps,K) \geq 3^{n-n_0}/\eps$.
 Therefore
 $\htop(f,K) = \log 3 = \lambda(x)$ for every $x \in \S^1$, and so every point is a full entropy point.
 Regarding $h_{\aa}(z)$, if $K_n(z) = \overline{B(z,e^{-\aa n}})$, then
 after $k = \aa n/\log 3$ iterates, $f^k(K_n(z)) = \S^1$.
 Therefore
 \begin{eqnarray*}
 \frac1n \log S(n,\eps, K_n(z)) &\sim&
 \frac1n \log S(n-k,\eps, [0,1]) \\
 &=& (1-\frac{\aa}{\log 3}) \frac{1}{n-k}\log S(n-k,\eps, [0,1]) \to  (1-\frac{\aa}{\log 3}) \htop(f).
 \end{eqnarray*}
This is independent of $z$ because the Lyapunov exponent is the same for every $z$.

(ii) However, if $g:\S^1 \to \S^1$ is given by
 $$
 g(x) = \begin{cases}
         2x & \text{ if }\ 0 \leq x < \frac12,\\[1mm]
           4x-2 & \text{ if }\ \frac12 \leq x < \frac34,\\[1mm]
           4x-3 & \text{ if }\ \frac34 \leq x < 1,
        \end{cases}
$$
then $\htop(g,K) = \log 3$, independent of the Lyapunov exponent which is non-constant for this map. Again every point is a full entropy point.
But this time, the Lyapunov exponent $\lambda(z)$ varies with the point, and it takes $k \sim \frac{\aa}{\lambda(z)}$ iterates for
$K_n(z)$ to reach large scale. If the limit $\lambda(z)$ exists,
then $h_{\aa}(z) =  (1-\frac{\aa}{\lambda(z)}) \htop(f)$.

(iii) The same method shows that $h_{\aa}(z) = 0$ for $z = 0$ and the Pomeau-Manneville map
$$
g_{PM}(x) = \begin{cases}
         \frac{x}{1-x} & \text{ if }\ 0 \leq x \leq \frac12,\\[1mm]
           2x-1 & \text{ if }\ \frac12 < x < 1.
        \end{cases}
$$
For this map, 
 {\blue
$g_{PM}(\frac1n) = \frac{1}{n-1}$ for $n \geq 2$,  } so if $K_n(0) = [0, e^{-\aa n}]$,
then it takes $e^{\aa n} \gg n$ iterates to reach unit size.
The above computation  gives $h_{\aa}(0) = 0$ for all $\aa > 0$.

(iv) On the other hand, $h_{\aa}(z) = \log 2$ for $z=0$ and every $\aa \geq 0$ for the map
$$
g_{\sqrt{}}(x) = \begin{cases}
         \sqrt{2x} & \text{ if }\ 0 \leq x < \frac12,\\[1mm]
           2x-1 & \text{ if }\ \frac12 \leq x < 1.
        \end{cases}
$$
This time, {\blue$g^k_{\sqrt{}}(e^{-\aa n}) = 2^{ 1-2^{-k} } e^{-\aa n2^{-k}}$, }
%This time,  {\blue$g^k_{\sqrt{}}(e^{-\aa n}) \approx 2^{ 1-2^{-k} } e^{-\aa n2^{k}$,  }
so it takes no more than $k \approx \log n$ iterates to reach large scale.
Hence, $h_{\aa}(0) = \htop(g_{\sqrt{}}) = \log 2$ for all $\aa \geq 0$.

\ifpic
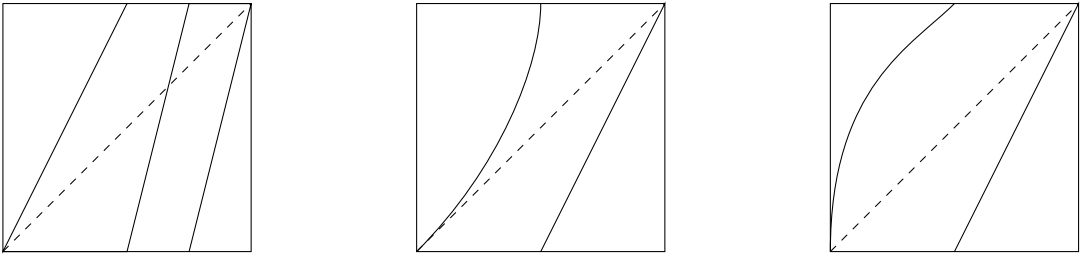
\begin{figure}[ht]
\begin{center}
\begin{tikzpicture}[scale=0.55]
\draw[-] (3,6) -- (0,0) -- (0,6) -- (6,6) -- (6,0)--(0,0) -- (3,0)--(4.5,6)--(6,6) -- (4.5,0);
\draw[-, dashed] (0,0)--(6,6);
\draw[-] (13,0)--(16,6)--(16,0)--(10,0)--(10,6)--(16,6);
\draw[-, dashed] (10,0)--(16,6);
\draw[-] (10,0) .. controls (12,2) and (13, 4.5) .. (13,6);
\draw[-] (23,0)--(26,6)--(26,0)--(20,0)--(20,6)--(26,6);
\draw[-, dashed] (20,0)--(26,6);
\draw[-] (20,0) .. controls (20, 4) and (22,5) .. (23,6);
\end{tikzpicture}
\caption{The maps $g$, $g_{PM}$ and $g_{\sqrt{}}$.}
\label{fig:maps}
\end{center}
\end{figure}
\fi

\end{example}

\subsection{Properties of \tl entropy}\label{sec:aadimension}

As the \tl entropy $h_{\aa}(z) = 0$ if $f$ is constant on a neighbourhood of $z$, we make the assumption:
\begin{quote}
{\bf In the remainder of this paper, $f:X \to X$ is a local homeomorphism.}
\end{quote}

\begin{lemma}
The \tl entropy for iterates of the map $f$ satisfies {\red $\underline{h}_{\aa}(f^R,z) = R \underline{h}_{\aa}(f,z)$ for integers $R \geq 0$ and also for
$\underline{h}_{\aa}(f^{-R},z) = R \underline{h}_{\aa}(f,z)$ for integers $R \geq 0$ if $f$ is invertible. The analogous statements hold for $\overline{h}_{\aa}$.}
\end{lemma}

\begin{proof}
The proof goes as for the analogous statement on topological entropy.
\end{proof}

\begin{definition}
 We say that $u$ has a {\em super-exponential approach} to $v$ if
 $$
 \limsup_{k\to\infty}\ -\frac1k \log d(f^k(u), v) = \infty.
 $$
\end{definition}

Clearly a periodic point $u$ has a {\em super-exponential approach} to 
every $v \in \orb_f(u)$,
but the definition refers to other situations too.
The following example shows this, but also that {\em super-exponential approach}
is not a symmetric or transitive notion.

\begin{example}\label{exam:uvw}
Let $X = \{ 0,1\}^\Z$ and $\sigma:X \to X$ be the left shift.
Let $w = 0^\infty . 0^\infty \in X$ the sequence that is constant $0$.
Let
$$
v = 1^\infty.10 1^{3!-2!} 0^{4!-3!} 1^{5!-4!} 0^{6!-5!}
 1^{7!-6!}  0^{8!-7!} \dots
$$
Note that the number of iterates $2+\sum_{j=1}^{n-1} j!-(j-1)! = (n-1!) + 1$ before the $n$-th block of $0$s is reach is negligible compared to the length $n!-(n-1)! = (n-1) (n-1)!$ of that block,
so $v$ exponentially approaches $w$, but as $w$ is a fixed point,
$w$ does not approach $v$.
Finally, let
$$
u = 1^\infty.1^{2!} v_1\dots v_{3!-2!} 1^{4!-3!}
 v_1\dots v_{5!-4!} 1^{6!-5!} v_1\dots v_{7!-6!} 1^{8!-7!} \dots
$$
Then $u$ has super-exponential approach to $v$, but not to $w$.
\end{example}

\begin{prop}\label{prop:super-approach}
Let $f:X \to X$ be a Lipschitz continuous map on a metric space.
 If $u$ has a {\em super-exponential approach} to $v$, then $\underline{h}_{\aa}(u) \leq \overline{h}_{\aa}(v)$.
\end{prop}

\begin{proof}
If $\aa = 0$, then the \tl entropy function coincides with the topological entropy,
and there is nothing to prove. So assume that $\aa > 0$.

Let $L$ be a global Lipschitz constant of $f$; for simplicity, we assume that $\log 2L > 1+2\aa$.
Choose $\eps > 0 $ and $R > \log L$ arbitrary and pick $k \in \N$ such that $d(f^k(u), v) \leq e^{-Rk} < \eps/4$.
Then for $n = \lceil Rk/\aa \rceil$ we have
$$
f^k (\overline{B(u,e^{-\aa (n+k) })} ) \subset B(f^k(u),e^{-\aa(n+k)+k \log L}) \subset
B(v, 2e^{ -\aa(n+k)+k \log L }) \subset \overline{B(v, e^{-\aa(n-k')})}
$$
for $k' := k(\frac{\log 2L}{\aa} - 1) > k$.

We can assume that {\blue $\diam f^j(\overline{ B(u,e^{-\aa (n+k) }) } ) < \eps$ }for $j \leq k$, so
in the first $k$ iterates, no points in {\blue$\overline{ B(u,e^{-\aa (n+k) }) }$ }
are $\eps$-separated.
Therefore
\begin{eqnarray*}
\frac{1}{n+k} \log S(n+k,\eps, \overline{B(u, e^{-\aa (n+k)})} )
&\leq&
\frac{1}{n+k} \log S(n,\eps, \overline{B(f^k(u), e^{-\aa(n + k) + k\log L} ) }) \\
&\leq& \frac{1}{n+k}
\log \left( S(n,\eps, \overline{B(v,e^{-\aa (n - k')})} ) \right) \\
&\leq& \frac{1}{n+k}
\log \left( S(n-k',\eps, \overline{B(v,e^{-\aa (n - k')})} ) \cdot e^{ (k'+k) \htop(f) } \right) \\
&\leq& \frac{1}{n-k'} \log S(n-k',\eps, \overline{B(v,e^{-\aa(n - k')})} ) + \frac{\htop(f)\log 2L}{R},
\end{eqnarray*}
where  in the last line we used $\frac{k'+k}{n+k} < \frac{\log 2L}{R}$.
Since $\htop(f) \leq \log L$ for Lipschitz maps and $\eps$ and $R$ are arbitrary,
$\underline h_{\aa}(u) \leq \overline h_{\aa}(v)$, as claimed.
\end{proof}

\begin{example}
 This example shows that no other inequality holds in Proposition~\ref{prop:super-approach}.
 Let $v$ and $w$ be the one-sided versions of the sequences $v$ and $w$ in
 Example~\ref{exam:uvw}, and interpret them as the itineraries of two points $p_v \in [0,1]$ and $p_w = 0$ for the map $g$ in Example~\ref{exam:log3}(ii),
 with respect to the partition $I_0 = [0,\frac12)$, $I_1 = [\frac12,\frac34)$ and $I_2 = [\frac34,1]$.
 Then $v$ approaches $w$ super-exponentially, but
 $\overline h_\aa(p_u) = (1-\frac{\aa}{\log 4}) \log 3 > (1-\frac{\aa}{\log 2}) \log 3
 = h_{\aa}(p_w)$ for every $\aa \in (0, \log 2]$.
\end{example}

\subsection{\Tl entropy for one-dimensional maps}\label{sec:1D}

The next theorem is exemplary for the \tl entropy of one-dimensional maps.
It holds by and large also for smooth interval maps with critical points, but dealing with the technicalities of distortion control is not the purpose of this paper, so we only state it for expanding circle maps.

\begin{theorem}\label{thm:circle}
 Let $f:\bS^1 \to \bS^1$ be a $C^2$ expanding circle map
 and $\aa \geq 0$, then for every $z \in \bS^1$
 $$
 \overline h_{\aa}(z) = \max\{\left( 1 - \aa/\overline \lambda(z)\right)  \htop(f)\ , \ 0 \},
 $$
 where $\overline \lambda(z) = \limsup_{n \to \infty} \frac1n \log |Df^n(z)|$ is the upper Lyapunov exponent of $z$,
 and
 $$
 \underline h_{\aa}(z) =\left( 1 - \aa/\underline \lambda(z)\right) \htop(f) \},
 $$
 where $\underline \lambda(z)  = \liminf_{n \to \infty} \frac1n \log |Df^n(z)|$ is the lower Lyapunov exponent of $z$.
\end{theorem}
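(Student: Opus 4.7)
The plan is to reduce $S(n,\eps,B(z,e^{-\aa n}))$ to an entropy computation on a macroscopic piece, by introducing the stopping time $k_0 = k_0(n,\eps)$, defined as the smallest $k$ with $|f^k(B(z,e^{-\aa n}))| \geq \eps$. The $C^2$ expanding assumption supplies bounded distortion: as long as $J_k := f^k(B(z,e^{-\aa n}))$ has diameter at most a fixed geometric constant $\delta_0$, one has $|J_k| \asymp |Df^k(z)|\cdot e^{-\aa n}$ with multiplicative constants independent of $n$ and $k$, and $f^{k_0}$ is injective on $B(z,e^{-\aa n})$ once $\eps \ll \delta_0$. Because $|J_j|<\eps$ for every $j<k_0$, no $\eps$-separation can occur before time $k_0$, so $f^{k_0}$ sets up a bijection between $(n,\eps)$-separated subsets of $B(z,e^{-\aa n})$ and $(n-k_0,\eps)$-separated subsets of $J_{k_0}$, yielding
\begin{equation*}
S\bigl(n,\eps,B(z,e^{-\aa n})\bigr) = S\bigl(n-k_0,\eps,J_{k_0}\bigr).
\end{equation*}

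For the entropy factor, I would use that $f$, being uniformly expanding and hence topologically exact, satisfies $f^{N_\eps}(U) = \S^1$ for every open $U$ with $|U|\geq \eps$, where $N_\eps = O(\log(1/\eps))$ is uniform in $U$. A standard pullback argument then gives $\frac{1}{n-k_0}\log S(n-k_0,\eps,J_{k_0}) \to h(\eps)$ as $n\to\infty$, where $h(\eps) := \limsup_m \frac1m \log S(m,\eps,\S^1) \to \htop(f)$ as $\eps\to 0$, uniformly over the admissible $J_{k_0}$. To locate $k_0(n,\eps)/n$, the defining relation $|Df^{k_0}(z)|\cdot e^{-\aa n} \asymp \eps$ gives $k_0 \cdot \lambda_{k_0}(z) = \aa n + O(\log(1/\eps))$, with $\lambda_k(z) := \frac1k \log|Df^k(z)|$. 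Since $F(k) := \log|Df^k(z)|$ is strictly increasing to $\infty$ and $\lambda_k(z)$ takes values arbitrarily close to both $\overline\lambda(z)$ and $\underline\lambda(z)$, a direct calculation with the generalized inverse $k_0 = F^{-1}(\aa n + O(1))$ yields
\begin{equation*}
\liminf_n \frac{k_0(n,\eps)}{n} = \frac{\aa}{\overline\lambda(z)} + o_\eps(1), \qquad
\limsup_n \frac{k_0(n,\eps)}{n} = \frac{\aa}{\underline\lambda(z)} + o_\eps(1).
\end{equation*}
Combining via $\frac1n \log S(n,\eps,B) = \frac{n-k_0}{n}\cdot \frac{1}{n-k_0}\log S(n-k_0,\eps,J_{k_0})$ and sending $\eps\to 0$ gives both claimed identities; the degenerate regime $\aa\geq \overline\lambda(z)$ (resp.\ $\underline\lambda(z)$), where $k_0>n$ eventually and $S(n,\eps,B)=1$, is absorbed by the truncation at $0$ built into the definition of $\overline h_\aa$.

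The main obstacle I anticipate is the interaction of the $\limsup$ (resp.\ $\liminf$) in $n$ between the time-fraction factor $(n-k_0)/n$ and the entropy-rate factor $\frac{1}{n-k_0}\log S(n-k_0,\eps,J_{k_0})$: one has to be careful not to multiply two subsequence-dependent extrema blindly. This is resolved by the observation that the entropy-rate factor converges to $h(\eps)$ as a genuine limit (independent of subsequence), while only the time-fraction factor is subsequence-dependent; hence the $\limsup$ (resp.\ $\liminf$) of the product equals the corresponding extremum of the time-fraction factor times $h(\eps)$. A secondary technicality is controlling distortion during the $O(\log(1/\eps))$-iterate window in which $|J_k|$ transits from $\eps$ up through $\delta_0$ to covering $\S^1$, but this affects the final estimate only by $O(\log(1/\eps))/n$, which vanishes after sending $n\to\infty$ and then $\eps\to 0$.
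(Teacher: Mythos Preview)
Your proposal is correct and follows essentially the same strategy as the paper: introduce a stopping time at which the shrinking ball reaches macroscopic scale, use the $C^2$ bounded-distortion (Adler) estimate to relate that stopping time to $\aa n/\lambda_{k}(z)$, and then invoke topological exactness to identify the remaining entropy factor with $\htop(f)$. The only cosmetic difference is that the paper fixes a single distortion scale $\eta$ (independent of $\eps$) for the stopping time $k_n(z)$ and pays for the overshoot with an extra factor $k_n(z)/\eps$ in the upper bound, whereas you stop at scale $\eps$ itself and thereby obtain the exact identity $S(n,\eps,B)=S(n-k_0,\eps,J_{k_0})$; your explicit discussion of why the product of the time-fraction and entropy-rate factors behaves well under $\limsup/\liminf$ is a point the paper leaves implicit.
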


\begin{proof}
If $\aa = 0$, then $B(z,e^{-\aa n})$ is independent of $n$,
and the topological transitivity gives $\underline h_0(z) = \overline h_0(z) = \htop(f)$ for all $z \in \bS^1$.
So let us fix $\aa > 0$.

Since $f$ is a $C^2$ expanding circle map, there are {\blue$c, c^{+}, c_2 > 0$ }such that
 for all $x \in \S^1$ we have expansion: $c^+ \geq |f'(x)| \geq c > 1$ and $f''(x) \leq c_2$.
 Therefore
 Adler's condition: $A(f,x) := |f''(x)|/|f'(x)|^2 \leq A := c_2/c^2$
 holds for all $x \in \S^1$ (see \cite{Ad,Bow79}).
 This means that Adler's conditions holds uniformly over all iterates.
 Indeed, due to a recursive formula $A(f \circ g,x) = (Af, g(x)) +
 \frac{1}{f' \circ g(x)} \cdot A(g,x)$, we get
 $$
 \frac{|(f^n)''|}{|(f^n)'|^2} \leq \frac{Ac}{c-1} \qquad \text{ for all } n \geq 0.
 $$
Fix $\eta := \frac{c-1}{2cA}$ and abbreviate $K_n(z) = \overline{B(z,e^{-\aa n})}$.
Let
$$
k_n(z) = \max\{ k \geq 0 : \diam( f^k(K_n(z)) ) \leq \eta \}.
$$
Take $\xi \in K_n(z)$ such that $(f^{k_n(z)})'(\xi) |K_n(z)| = |f^{k_n(x)}(K_n(z))| \in [\eta/c^+, \eta]$. Then
$$
\left|\frac{1}{ (f^{k_n(z)})'(\xi)} - \frac{1}{ (f^{k_n(z)})'(z) } \right|
= \left| \int_z^\xi \left( \frac{1}{ (f^{k_n(z)})'(x)} \right)' \, dx \right|
= \int_z^\xi A(f^{k_n(z)}, x) \, dx \leq\frac{Ac}{c-1} |\xi-z|.
$$
This gives
$$
\left| 1 - \left| \frac{ (f^{k_n(z)})'(\xi) }{ (f^{k_n(z)})'(z) } \right| \right|
\leq \frac{Ac}{c-1} \left| (f^{k_n(z)})'(\xi)\right| \,  |\xi-z|
\leq \frac{Ac}{c-1} \eta = \frac12,
$$
so that $\frac12 \leq \left| \frac{ (f^{k_n(z)})'(\xi) }{ (f^{k_n(z)})'(z) } \right| \leq \frac32$.
Recalling that $|K_n(z)| = 2 e^{-\aa n}$, we get
$
\frac{\eta}{3c^+} e^{\aa n} \leq |(f^{k_n(z)})'(z)| \leq \eta e^{\aa n}$, so that
$$
{\aa} + \frac{1}{n} \log \frac{\eta}{3c^+} \leq
\frac{k_n(z)}{n} \frac{1}{k_n(z)} \log |(f^{k_n(z)})'(z)| \leq {\aa} + \frac1n \log \eta.
$$
In the inferior/superior limit, we get $\liminf_{n \to \infty} \frac{k_n(z)}{n} = \frac{\aa}{\overline{\lambda}(z)}$
and $\limsup_{n \to \infty} \frac{k_n(z)}{n} = \frac{\aa}{\underline{\lambda}(z)}$,
respectively.

Let $N_\eta$ be such that $f^{N_\eta}(J) \supset \S^1$ for every interval $J$ of length $\eta$.
Then for the statement on the $\limsup$, we get
\begin{eqnarray*}
\limsup_{n\to\infty} \frac1n \log S(n,\eps,K_n(z)) &\geq&
\limsup_{n\to\infty} \frac{1}{n} \log S\left(n-k_n(z)-N_\eta,\eps,f^{k_n(z) + N_\eta}(K_n(z)) \right) \\
&=& \limsup_{n\to\infty} \frac{n-k_n(z)-N_\eta}{n} \frac{\log S(n-k_n(z)-N_\eta,\eps,\S^1 ) }{n-k_n(z)-N_\eta} \\
&=&  \left(1-\frac{\aa}{\overline\lambda(z)} \right) \limsup_{n\to\infty} \frac{\log S(n-k_n(z)-N_\eta,\eps,\S^1 ) }{n-k_n(z)-N_\eta}. 
\end{eqnarray*}
Taking the limit $\eps \to 0$, we obtain $\overline h_{\aa}(z) \geq 
\left(1-\frac{\aa}{\overline\lambda(z)} \right) \htop(f)$.

For the other inequality, note that restrictions $f^j|_{\overline{B(z,e^{-\aa n})}}$, $1 \leq j \leq k_n(z)$, are homeomorphisms, and therefore it can separate at most $1/\eps$ points per iterate. Therefore
\begin{eqnarray*}
\limsup_{n\to\infty} \frac1n \log S(n,\eps,K_n(z)) &\leq&
\limsup_{n\to\infty} \frac{1}{n} \log \left( S\left(n-k_n(z),\eps,f^{k_n(z)}(K_n(z)\right) \cdot \frac{k_n(z)}{\eps} \right) \\
&\leq& \limsup_{n\to\infty} \frac{n-k_n(z)}{n} \frac{ \log S(n-k_n(z),\eps,\S^1) }{n-k_n(z)} + \frac1n \log \frac{k_n(z)}{\eps}\\
&=&  \left(1-\frac{\aa}{\overline\lambda(z)} \right) \limsup_{n\to\infty} \frac{\log S(n-k_n(z),\eps,\S^1 ) }{n-k_n(z)}. \
\end{eqnarray*}
Taking the limit $\eps \to 0$, we obtain $\overline h_{\aa}(z) \leq 
\left(1-\frac{\aa}{\overline\lambda(z)} \right) \htop(f)$,
so $\overline h_{\aa}(z) = 
\left(1-\frac{\aa}{\overline\lambda(z)} \right) \htop(f)$.

Similarly, for the statement on the $\liminf$, we get
\begin{eqnarray*}
\liminf_{n\to\infty} \frac1n \log S(n,\eps,K_n(z)) &\geq&
\liminf_{n\to\infty} \frac{1}{n} \log S(n-k_n(z)-N_\eta,\eps,f^{k_n(z) + N_\eta}(K_n(z)) \\
&=& \liminf_{n\to\infty} \frac{n-k_n(z)-N_\eta}{n} \frac{\log S(n-k_n(z)-N_\eta,\eps,\S^1 ) }{n-k_n(z)-N_\eta} \\
&=&  \left(1-\frac{\aa}{\underline\lambda(z)} \right) \liminf_{n\to\infty} \frac{\log S(n-k_n(z)-N_\eta,\eps,\S^1 ) }{n-k_n(z)-N_\eta}.
\end{eqnarray*}
Taking the limit $\eps \to 0$, we obtain $\underline h_{\aa}(z) \geq 
\left(1-\frac{\aa}{\underline\lambda(z)} \right) \htop(f)$.

The other inequality is obtained as follows:
\begin{eqnarray*}
\liminf_{n\to\infty} \frac1n \log S(n,\eps,K_n(z)) &\leq&
\liminf_{n\to\infty} \frac{1}{n} \log \left( S\left(n-k_n(z),\eps,f^{k_n(z)}(K_n(z))\right) \cdot \frac{k_n(z)}{\eps} \right) \\
&\leq& \liminf_{n\to\infty} \frac{n-k_n(z)}{n} \frac{ \log S(n-k_n(z),\eps,\S^1) }{n-k_n(z)} + \frac1n \log \frac{k_n(z)}{\eps}\\
&=&  \left(1-\frac{\aa}{\underline\lambda(z)} \right) \liminf_{n\to\infty} \frac{\log S(n-k_n(z),\eps,\S^1 ) }{n-k_n(z)}.
\end{eqnarray*}
Taking the limit $\eps \to 0$, we obtain $\underline h_{\aa}(z) \leq 
\left(1-\frac{\aa}{\underline\lambda(z)} \right) \htop(f)$,
so $\underline h_{\aa}(z) = 
\left(1-\frac{\aa}{\underline\lambda(z)} \right) \htop(f)$.
This finishes the proof.
\end{proof}

\subsection{\Tl entropy for higher-dimensional maps}\label{sec:2D}

\begin{lemma}
 If $F = f \times g:X \times Y \to X \times Y$ is a Cartesian product and $z = (x,y)$, then
 $h_{\aa}(F,z) = h_{\aa}(f,x) + h_{\aa}(g,y)$.
\end{lemma}

\begin{proof}
 We can take Cartesian product of $(n,\eps)$-separated sets as
 $(n,\eps)$-separated sets for the Cartesian product $f \times g$, and there are
 no more efficient sets. Inserting this in the definition of \tl entropy proves the result.
\end{proof}

\begin{theorem}\label{thm:toral}
Let $A$ be a unimodular integer $d \times d$-matrix
and $F_A:\T^d \to \T^d$, $x \mapsto Ax \bmod 1$, the affine toral automorphism determined by $A$,
  then {\red $h_{\aa}(z) = \sum_{\log |\lambda_i| \geq \aa } (\log |\lambda_i| - \aa)$,} where $\lambda_i$, $i = 1, \dots, d$, are the eigenvalues of the matrix $A$.
\end{theorem}

\begin{proof}
 Let $\mu_1 > \mu_2 > \dots > \mu_k$ be the set of absolute values of eigenvalues and $\kappa_j$ is the number of eigenvalues (counted with multiplicity) of absolute value $\mu_j$.
 Each $\mu_j$ represents (possibly several) eigenvalues, and the direct sums of their generalized eigenspaces are denoted as $E_j$; it is a hyperplane and we let $1 \leq d_j \leq \dim(E_j)$ be the size of the largest Jordan block in
 the Jordan representation of $F_A|_{E_j}$.
 If $K_{n,j}(z)$ is the intersection of $B(z, e^{-\aa n})$ and a local plane in the direction
 of $E_j$, then $F_A^k(K_{n,j}(z))$ reaches unit size after $k_n(z) = C n^{1-d_j} e^{ n \log \mu_j}$ iterates, for some uniform constant $C> 0$.
 Here we assume that $\log \mu_j \geq \aa$, because otherwise
$F_A^k(K_{n,j}(z))$ does not reach unit size for $k \leq n$ iterates.
 Therefore,  $S(n,\eps, K_{n,j}(z)) \approx S(n-k_n(z), \eps, E_j(z))$, where $E_j(z)$ is a unit size $\kappa_j$-dimensional ball centered at $z$ in the direction of $E_j$.
 Taking the logarithm and the limit $n \to \infty$, we obtain $h_\aa(z) = \lim_{n\to\infty} \frac{n-k_n(z)}{n} \frac{1}{n-k_n(z)}\log S(n-k_n(z), \eps, E_j(z)) = \kappa_j \max\{ \log \mu_j - \aa, 0\}$.
 Locally, $F_A$ acts as the Cartesian product of $F_A|_{E_1(z)} \times \dots \times F_A|_{E_k(z)}$.
 Therefore $h_\aa(z) = \sum_{\log \mu_j \geq \aa} (\log \mu_j - \aa) =
 \sum_{\log |\lambda_i| \geq \aa} (\log |\lambda_i| - \aa)$, as claimed.
\end{proof}

In fact, if $F$ is a toral automorphism (or Anosov map) that is not necessarily affine,
but has bounded distortion, then the formula becomes
$\overline{h}_{\aa}(z) = \htop(f) - \sum_{\log |\overline{\lambda}_i(z)| \geq \aa}
(\log|\overline{\lambda}_i(z)|-\aa)$,
where $\overline{\lambda}_i$ stands for the $i$-th upper Lyapunov exponent.
The same result works for affine hyperbolic horseshoe maps (or non-affine but with bounded distortion).
The global product structure of these maps is important to get this outcome, because the formula
can be quite different too, as the following example shows.

\begin{example}\label{ex:disk}
Let $\overline \D$ be the closed unit disk, and let $f:\overline \D \to \overline \D$ be given in polar coordinates by $f(r,\phi) \mapsto (r(2-r), 3\phi \pmod {2\pi})$. The derivative of $r \mapsto r(2-r)$ at $r = 0$ is $2$.
Therefore it takes $K_n(z) := \overline{B(z, e^{-\aa n})}$ about $\frac{\aa n + \log \eps}{\log 2}$ iterates to reach size $\eps$, and at this point, the $\phi$-component will let $S(n,\eps,K_n(z))$
grow, with exponential rate $\log 3$.
It follows that $h_{\aa}(0,0) = (1 - \frac{\aa}{\log 2})\log 3$.
\end{example}

It is maybe good at this point to make a comparison with
the notion of local entropy of $f:X \to X$ used by Buzzi \& Ruette \cite{Buz97,BR} and Newhouse \cite{N89}.
In their definition, an $(n,\eps)$-Bowen ball takes the place of our exponentially small ball $B(z,e^{-\aa n})$.
Thus the {\em local entropy} is {\blue
\begin{equation}\label{eq:Buz}
 \hloc(f) = \lim_{\eps \to 0} \hloc(f,\eps)
 \quad \text{ for } \quad
\hloc(f,\eps) = \lim_{\delta \to 0}\limsup_{n\to\infty} \frac1n \sup_{x \in X} \log r(n,\delta, B_n(x,\eps)).
\end{equation}
}

In fact, Buzzi uses {\blue$(n,\delta)$-spanning sets
(the minimal cardinality of which is denoted by $r$ in~\eqref{eq:Buz}) instead of $(n,\delta)$-separating sets, }  but that is of no consequence. Also, by taking the supremum over $x \in X$,
$\hloc(f)$ becomes a quantity independent of the point in space.
Buzzi shows that $\hloc(f) \leq \frac{\dim M}{r} \log \inf_n \root{n}\of{\| Df^n\|_\infty}$ if $f:M \to M$ is a $C^r$-smooth map on a manifold $M$, and hence
$\hloc(f) = 0$ for $C^\infty$ maps.
But if we adjust Example~\ref{ex:disk} to {\blue$f(r,\phi) = (r,3\phi \pmod {2\pi})$,} then the point $(0,0)$ gives
local entropy  $\log 3$.

Compared to $h_{\aa}(z)$, for conformal maps (such as the $C^2$ expanding circle maps of Theorem~\ref{thm:circle}), we find $\hloc(f) = h_{\aa}(z) = 0$ for $\aa = \lambda(z)$, but for non-conformal maps we don't expect $\hloc(f)$ and $h_{\aa}(z) = 0$ to coincide.

\section{Local measure-theoretic entropy and 
pressure}\label{sec:localpressure}

\subsection{Brin-Katok local measure entropy}
\label{sec:BK}
For a continuous map $f:X\to X$ defined on a compact metric space $(X,d)$ and $f$-invariant Borel probability measure $\mu$, Brin \& Katok \cite{BK83}
introduced notions of {\em local lower} and {\em local upper}
{\em measure-theoretic entropy} w.r.t.\ $\mu$ as follows
\begin{equation}\label{eq:BK}
\begin{cases}
{\underline h}_{\mu}(f,x):=\lim_{\eps \to 0} \liminf_{n \to \infty} -\frac{1}{n} \log \mu(B_n(x,\eps)), \\[2mm]
{\overline h}_{\mu}(f,x):=\lim_{\eps \to 0} \limsup_{n \to \infty} -\frac{1}{n} \log \mu(B_n(x,\eps)).
\end{cases}
\end{equation}

\begin{theorem}[Main Theorem and Corollary of \cite{BK83}]\label{thm:BK}
For a continuous map $f: X\to X$ on a compact metric space $(X,d)$ and Borel $f$-invariant probability measure $\mu$, the equality ${\underline h}_{\mu}(f,x)= {\overline h}_{\mu}(f,x):=h_{\mu}(f,x)$ holds
for $\mu$-a.e.\ $x \in X$. Moreover, $h_{\mu}(f,x)$ is $f$-invariant and
$$
\int_X h_{\mu}(f,x)\, d\mu(x) = h_{\mu}(f).
$$
{\red If $\mu$ is ergodic, then additionally $h_{\mu}(f,x) = h_\mu(f)$
for $\mu$-a.e.\ $x$.}
\end{theorem}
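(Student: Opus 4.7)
The Brin--Katok argument decomposes into three pieces: (i) $F$-invariance of $\underline{h}_\mu(F,\cdot)$ and $\overline{h}_\mu(F,\cdot)$, (ii) an upper bound $\int \overline{h}_\mu(F,x)\, d\mu \leq h_\mu(F)$, and (iii) a matching lower bound $\int \underline{h}_\mu(F,x)\, d\mu \geq h_\mu(F)$. Combined with the trivial pointwise inequality $\underline{h}_\mu \leq \overline{h}_\mu$, these three pieces force $\mu$-a.e.\ equality and deliver the integral formula. Invariance is the easy piece: the identity $B_{n+1}(x,\varepsilon) = B(x,\varepsilon)\cap F^{-1}B_n(Fx,\varepsilon)$ gives $\mu(B_{n+1}(x,\varepsilon)) \leq \mu(B_n(Fx,\varepsilon))$, so after $-\tfrac1n \log$ and limits one obtains $\overline{h}_\mu(F,Fx) \leq \overline{h}_\mu(F,x)$, and similarly for $\underline{h}_\mu$; the pointwise nonnegative difference has $\mu$-integral zero by $F$-invariance of $\mu$, forcing a.e.\ equality.

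For the upper bound, fix $\varepsilon>0$, pick a finite measurable partition $\xi$ with $\diam(\xi)<\varepsilon$, and let $\xi_n(x)$ denote the atom of $\bigvee_{j=0}^{n-1}F^{-j}\xi$ containing $x$. Every $y \in \xi_n(x)$ satisfies $d(F^j x,F^j y)<\varepsilon$ for $0 \leq j<n$, so $\xi_n(x)\subseteq B_n(x,\varepsilon)$ and hence $\mu(B_n(x,\varepsilon)) \geq \mu(\xi_n(x))$. By the Shannon--McMillan--Breiman theorem, $-\tfrac1n \log \mu(\xi_n(x)) \to h_\mu(F,\xi,x)$ $\mu$-a.e., with $\mu$-integral $h_\mu(F,\xi)$. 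Letting $\varepsilon \to 0$ through a refining sequence of partitions yields $\int \overline{h}_\mu(F,x)\, d\mu \leq h_\mu(F)$.

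The lower bound is the delicate direction. By outer regularity of $\mu$, for any $\eta>0$ one can choose $\xi$ of diameter $<\varepsilon$ so that the $\delta$-neighbourhood $\partial_\delta\xi$ of its boundary satisfies $\mu(\partial_\delta\xi)<\eta$. Birkhoff applied to $\mathbf{1}_{\partial_\delta\xi}$ shows that for $\mu$-a.e.\ $x$, all but an $\eta$-fraction of the times $0\leq j<n$ satisfy $B(F^jx,\delta)\subset \xi(F^jx)$. A counting argument then bounds the number of $\xi_n$-atoms intersecting $B_n(x,\delta)$ by $N_n := \binom{n}{\eta n}(\#\xi)^{\eta n}$, so together with the Shannon--McMillan--Breiman atom-size estimate one obtains $\mu(B_n(x,\delta)) \leq N_n\cdot \exp(-n(h_\mu(F,\xi,x)-o(1)))$. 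This yields $\underline{h}_\mu(F,x) \geq h_\mu(F,\xi,x) - \eta\log\#\xi - H(\eta)$, where $H(\eta) := -\eta\log\eta - (1-\eta)\log(1-\eta)$. Integrating, sending $\eta\downarrow 0$, and refining $\xi$ along a sequence with $h_\mu(F,\xi)\nearrow h_\mu(F)$, gives $\int \underline{h}_\mu(F,x)\, d\mu \geq h_\mu(F)$.

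The main obstacle is the combinatorial bookkeeping in the lower bound: at ``bad'' times $j$ where $F^jx$ sits close to $\partial\xi$, the ball $B(F^jx,\delta)$ can straddle several atoms, and one must verify that the cumulative effect over the $n$ iterates is only the subexponential penalty $H(\eta)+\eta\log\#\xi$, rather than an exponential blow-up. Stirling converts the raw count $N_n$ into this Shannon-type bound, which vanishes as $\eta\downarrow 0$. A secondary subtlety is that refining $\xi$ (to push $h_\mu(F,\xi)$ up to $h_\mu(F)$) interacts badly with the requirement that $\mu(\partial_\delta\xi)$ be small; this is handled by fixing in advance a countable refining sequence of partitions along which both estimates can be run, and extracting a diagonal subsequence. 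The non-ergodic case is then absorbed by the ergodic decomposition, which is legal because $h_\mu(F,\xi,x)$ is the expectation of a conditional entropy with integrable representative.
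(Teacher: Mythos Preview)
The paper does not supply a proof of this theorem at all: it is stated with attribution to Brin \& Katok \cite{BK83} and used as an imported result, so there is nothing in the paper to compare your argument against. Your outline is the classical Brin--Katok argument (invariance via the inclusion $B_{n+1}(x,\varepsilon)\subset F^{-1}B_n(Fx,\varepsilon)$, the upper bound from $\xi_n(x)\subset B_n(x,\varepsilon)$ combined with Shannon--McMillan--Breiman, and the lower bound from boundary control $\mu(\partial_\delta\xi)<\eta$ together with the combinatorial count $\binom{n}{\eta n}(\#\xi)^{\eta n}$), and it is correct in its essentials; the only caveats are the usual ones you already flag, namely integrability of $\overline h_\mu$ before running the invariance argument (secured by the upper bound step, at least when $h_\mu(F)<\infty$) and the diagonal refinement of $\xi$.
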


Feng \& Huang \cite[Theorem 1.2(i)]{FH12}
showed in 2012 the following variational principle for the Brin-Katok  local entropy.  For a continuous map $f:X\to X$ defined on a compact metric space $(X,d)$ and a
 compact non-empty subset $K \subset X$ they obtain 

\begin{equation}\label{eq:FH}
\htop^{FH}(f,K) = \sup\{ \underline{h}_\mu(f) : \mu \text{ \red is a Borel probability measure and } \mu(K)=1\},
\end{equation}
{\red where $\underline{h}_\mu(f)$ local lower entropy.
However, here $\htop^{FH}(f,K)$ is   an entropy-like quantity constructed in a way similar to the construction of $s$-Hausdorff measure and Hausdorff dimension,
 called {\em Bowen topological entropy}, see \cite[Section 2.2]{FH12}. 
This is based on a Carath\'eodory-like construction,
but a bit more involved than the Bowen topological entropy used
in Theorem~\ref{thm:MW} below, and which we discuss in the next section, generalized to pressure.}

\subsection{Topological pressure and Carath\'eodory structures}\label{sec:toppressure}

In 2008, Ma \& Wen \cite{MW} noticed relationship between topological entropy restricted to a set $Z \subset X$ and  local measure entropies on $Z$.

\begin{theorem}[Theorem 1 in \cite{MW}] \label{thm:MW}
For a continuous map $f: X\to X$ on a compact metric space $(X,d)$, a Borel  probability measure $\mu$,
a Borel subset $Z \subset X$ and $s \geq 0$ we have:
\begin{enumerate}
\item If ${\underline h}_{\mu}(f,x) \le s$~for~all~$x \in X$, then $\htop^B(f,Z) \le s$.
\item ${\overline h}_{\mu}(f,x) \ge s$~for~all~$x \in X$ and $\mu(Z)>0$, then $\htop^B(f,Z) \ge s$.
\end{enumerate}
\end{theorem}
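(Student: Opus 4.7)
The plan is to prove the two inequalities separately. Part~(1) uses a Besicovitch-type cover argument on Bowen balls; part~(2) uses a reverse-Borel--Cantelli argument combined with a greedy packing in $Z$.

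For part~(1), fix $\delta,\eps>0$. For each $x\in X$ and $N\in\N$, the hypothesis $\underline{h}_{\mu}(F,x)\le s$ supplies $n(x)\ge N$ with $\mu(B_{n(x)}(x,\eps))\ge e^{-n(x)(s+\delta)}$. The family $\{B_{n(x)}(x,\eps)\}_{x\in X}$ covers $X\supset Z$, and a Besicovitch-type covering lemma for Bowen balls extracts a countable subcover $\{B_{n_i}(x_i,\eps)\}_i$ of bounded overlap multiplicity $M$. Grouping by the value $n=n(x_i)$ and integrating the multiplicity bound against $\mu$ yields $\sum_n k_n e^{-n(s+\delta)}\le M$, where $k_n:=\#\{i:n_i=n\}$, so each summand satisfies $k_n e^{-n(s+\delta)} \le M$. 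Consequently
\[
\sum_{n\ge N} k_n e^{-n(s+2\delta)} \;=\; \sum_{n\ge N} \bigl(k_n e^{-n(s+\delta)}\bigr) e^{-n\delta} \;\le\; M \sum_{n\ge N} e^{-n\delta} \;\xrightarrow[N\to\infty]{}\; 0,
\]
which in the Pesin--Pitskel (cover-based) formulation gives $\htop(F,Z,\eps)\le s+2\delta$; letting $\delta\to 0$ concludes part~(1).

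For part~(2), fix $\delta\in(0,s)$. An Egorov-type reduction applied to the outer limit in $\overline{h}_{\mu}(F,y)=\lim_{\eps\to 0}\limsup_n -\tfrac{1}{n}\log\mu(B_n(y,\eps))$ produces $Z'\subset Z$ with $\mu(Z')\ge\mu(Z)/2$ and $\eps_*>0$ such that for every $\eps\in(0,\eps_*)$ and every $y\in Z'$ there are infinitely many $n$ with $\mu(B_n(y,\eps))\le e^{-n(s-\delta)}$. Fix such an $\eps$ and set $A_n:=\{y\in Z':\mu(B_n(y,\eps))\le e^{-n(s-\delta)}\}$, so that $Z'\subset\limsup_n A_n$; countable subadditivity then forces $\sum_{n\ge N}\mu(A_n)\ge \mu(\bigcup_{n\ge N} A_n)\ge\mu(Z')$ for every $N$. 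A geometric-series contradiction (if $\mu(A_n)\le e^{-n\gamma}$ for all large $n$, the tail would vanish, contradicting the previous bound) therefore yields $\limsup_n \tfrac{1}{n}\log\mu(A_n)\ge 0$. A greedy packing inside $A_n$---iteratively choosing a new point and discarding its Bowen ball of $\mu$-mass at most $e^{-n(s-\delta)}$---produces an $(n,\eps)$-separated subset of $Z$ of cardinality $\ge\mu(A_n) e^{n(s-\delta)}$. Hence
\[
\htop(F,Z,\eps) \;\ge\; \limsup_n \tfrac{1}{n}\log S(n,\eps,Z) \;\ge\; s-\delta+\limsup_n\tfrac{1}{n}\log\mu(A_n) \;\ge\; s-\delta,
\]
and letting $\eps\to 0$ then $\delta\to 0$ gives $\htop(F,Z)\ge s$.

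The main technical obstacles are the Besicovitch-type covering lemma for Bowen balls used in part~(1)---standard in dynamical dimension theory but requiring verification in the non-symmetric Bowen metric $d_n$---and the Egorov reduction in part~(2), which must upgrade the pointwise limit $\eps\to 0$ in the definition of $\overline{h}_{\mu}$ to a uniform-in-$\eps$ statement on a subset of $Z$ of positive $\mu$-measure. The $\limsup$-nature of $\overline{h}_{\mu}$ (as opposed to the stronger $\liminf$-version $\underline h_\mu$) is precisely what makes the reverse-Borel--Cantelli step in part~(2) necessary rather than a direct mass-distribution argument.
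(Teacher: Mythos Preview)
In Part~(1) you invoke a ``Besicovitch-type covering lemma for Bowen balls'' giving a subcover of bounded multiplicity, and you yourself flag this as an unresolved obstacle. No such lemma is available on a general compact metric space: Bowen balls $B_n(x,\eps)$ with varying $n$ satisfy no doubling or engulfing property, so the Besicovitch machinery does not apply. The paper (following Ma--Wen) sidesteps this entirely with the much weaker Vitali-type lemma for Bowen balls (Lemma~\ref{lem:cover}): from the family $\{B_{n(x)}(x,r):x\in Z\}$ one extracts a \emph{pairwise disjoint} subfamily $\{B_{n_i}(x_i,r)\}_{i\in I}$ whose $3r$-enlargements still cover $Z$. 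Disjointness alone gives $\sum_{i}e^{-(s+\delta)n_i}\le\sum_{i}\mu(B_{n_i}(x_i,r))\le 1$, and the cover by $\{B_{n_i}(x_i,3r)\}_{i\in I}$ then bounds the Carath\'eodory sum at scale $3r$ by $1$. No multiplicity bound is needed.

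In Part~(2) your greedy packing produces large $(n,\eps)$-separated subsets of $Z$, so it lower-bounds the separated-set \emph{capacity} entropy $\limsup_n\tfrac1n\log S(n,\eps,Z)$, not the Bowen--Pesin--Pitskel (Carath\'eodory) entropy $\htop(F,Z)$ that is meant here. The inequality you display, $\htop(F,Z,\eps)\ge\limsup_n\tfrac1n\log S(n,\eps,Z)$, goes the wrong way: for a non-invariant Borel set the Carath\'eodory quantity is always $\le$ the capacity quantity and can be strictly smaller (a single dense orbit has Bowen entropy zero but exponentially large separated sets). The paper's proof of the generalization (Theorem~\ref{thm:aaMW} with $\phi\equiv 0$) instead argues directly with covers via the mass distribution principle: on a set $V\subset Z$ of positive measure one has $\mu(B_n(x,r))\le e^{-n(s-\eps)}$ for \emph{all} $n\ge N_0$, so every cover $\{B_{n_j}(y_j,r)\}$ of $V$ with $n_j\ge N_0$ satisfies $\sum_j e^{-(s-\eps)n_j}\ge\sum_j\mu(B_{n_j}(y_j,r))\ge\mu(V)>0$. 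Note that this requires the bound for all large $n$, i.e.\ the hypothesis $\underline h_\mu\ge s$ rather than $\overline h_\mu\ge s$; the roles of $\underline h_\mu$ and $\overline h_\mu$ in Theorem~\ref{thm:MW} as stated appear reversed relative to Ma--Wen's original result and to the paper's own Theorem~\ref{thm:aaMW}.
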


{\red
The Bowen entropy $\htop^B$ used in this result is based on Carath\'eodory construction which we will discuss in this subsection at length, but for topological pressure, so with the inclusion of a potential function $\phi:X \to \R$.
By setting $\phi \equiv 0$ in the topological pressure defined below, we get $\htop^B(f)$. Wang \& Zhang \cite{WZ25} obtain a similar result for upper local entropies $\overline h_\mu(f, x)$
and {\em packing} topological entropies.

The aim of this subsection is to} provide a generalization of Theorem~\ref{thm:MW} to topological pressure.
Topological pressure, in the spirit of Carath\'eodory structures elaborated by Pesin \cite{Pes}, for a continuous map $f:X \to X$ defined on a compact metric space $(X,d)$, and restricted to a subset $Z \subset X$.
Fix a continuous map (called {\em potential}) $\phi:X \to \R$, $N \in \N$, $r>0$ and $s\ge 0$, define
$$
M_Z(s,r,\phi,N):=\inf\left\{\sum_{j \in J} \exp[-s\cdot n_j +\sum_{m=0}^{n_j-1}\phi(f^m(x_j))]: Z \subset \bigcup_{j \in J} B_{n_j}(x_j, r);~n_j \ge N\right\},
$$
where the infimum is over all finite or countable covers of $Z$ by Bowen balls $\{ B_{n_j}(x_j, r)\}_{j \in J}$ with $n_j \ge N$.
Here, the Bowen balls are indexed by the elements of a finite or countable set $J$.
Using standard arguments (see \cite{Pes})
one can easily show that for any $N \in \N$, the inequality $M_Z(s,r,\phi,N+1) \ge M_Z(s,r,\phi,N)$ holds. Therefore, there exists a limit
$$
M_Z(s,r,\phi):= \lim_{N \to \infty} M_Z(s,r,\phi,N).
$$
The graph of the function $s \mapsto M_Z(s,r,\phi)$ is very similar to the graph of $s$-Hausdorff measure function, i.e., there exists a unique critical parameter $s_0$, where the function
$s \mapsto M_Z(s,r,\phi)$ drops from $\infty$ to $0$. Thus, we can define
$$
M_Z(r,\phi):= \sup \{s \ge 0:  M_Z(s,r,\phi)=\infty   \}= \inf\{s \ge 0: M_Z(s,r,\phi) =0\}=s_0.
$$
Fix $r_1 \le r_2$ and consider a cover $\{ B_{n_j}(x_j, r_1)\}_{j \in J}$ of $Z$, with $n_j \ge N$. Then $\{ B_{n_j}(x_j, r_2)\}_{j \in J}$ is a cover of $Z$,
so $M_Z(s, r_1,\phi) \ge M_Z(s, r_2,\phi)$. Therefore
$$
M_Z(r_1,\phi)= \inf\{s \ge 0: M_Z(s,r_1,\phi) =0\}\ge \inf\{s \ge 0: M_Z(s,r_2,\phi) =0\} =M_Z(r_2,\phi).
$$
This proves that the function $r \to M_Z(r,\phi)$ is non-increasing, so there exists a limit
$$
P_Z(f,\phi):=\lim_{r \to 0} M_Z(r,\phi).
$$
The quantity $P_Z(f,\phi)$ is called the {\it topological pressure} of $f$, restricted to $Z$, with respect to the potential $\phi: X \to \R$.
Basic properties of the topological pressure are presented in the following lemma.

\begin{lemma}\label{lem:basprop}
For a continuous map $f:X \to X$ defined on a compact metric space $(X,d)$, $r>0$, $s\ge 0$, $N \in \N$ and a continuous potential $\phi: X \to \R$ we have:
\begin{enumerate}
\item If $Z_1\subset Z_2 \subset X$, then $M_{Z_1}(s,r,\phi,N)\le M_{Z_2}(s,r,\phi,N)$.
 \item If $Z_1\subset Z_2 \subset X$, then $P_{Z_1}(f, \phi) \le P_{Z_2}(f, \phi)$.
 \item If $Z =\bigcup_{k \in \N}Z_k$, then $P_{Z}(f, \phi) =\sup\{P_{Z_k}(f, \phi): k \in \N \}$.
\end{enumerate}
\end{lemma}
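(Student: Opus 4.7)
The plan is to derive the three statements in order, with each resting on elementary monotonicity of infima combined with the construction of the limiting quantities $M_Z(s,r,\phi)$, $M_Z(r,\phi)$, and $P_Z(F,\phi)$.

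For (1), I would argue directly from the definition: every finite or countable cover $\{B_{n_j}(x_j,r)\}_{j\in J}$ of $Z_2$ by Bowen balls with $n_j\ge N$ is automatically a cover of $Z_1$ satisfying the same constraints. The infimum defining $M_{Z_1}(s,r,\phi,N)$ is therefore taken over a larger admissible family than the one defining $M_{Z_2}(s,r,\phi,N)$, so it cannot exceed it. For (2), I would pass (1) to the limit $N\to\infty$ to get $M_{Z_1}(s,r,\phi)\le M_{Z_2}(s,r,\phi)$. Using the critical-parameter characterisation $M_Z(r,\phi)=\inf\{s\ge 0:M_Z(s,r,\phi)=0\}$, this yields $M_{Z_1}(r,\phi)\le M_{Z_2}(r,\phi)$ for every $r>0$, and the limit $r\to 0$ delivers $P_{Z_1}(F,\phi)\le P_{Z_2}(F,\phi)$.

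For (3), the inequality $\sup_k P_{Z_k}(F,\phi)\le P_Z(F,\phi)$ is immediate from (2) applied to each inclusion $Z_k\subset Z$. For the reverse inequality, fix $s>\sup_k P_{Z_k}(F,\phi)$ and an arbitrary $r>0$. Since $r\mapsto M_{Z_k}(r,\phi)$ is non-increasing, $s>P_{Z_k}(F,\phi)\ge M_{Z_k}(r,\phi)$, and thus $M_{Z_k}(s,r,\phi)=0$ by definition of the critical parameter. Because $N\mapsto M_{Z_k}(s,r,\phi,N)$ is non-negative, non-decreasing, and has limit zero, it is identically zero. Hence for any $\eps>0$ and any $N\in\N$, one can select a cover of $Z_k$ by Bowen balls $\{B_{n_j^k}(x_j^k,r)\}_{j\in J_k}$ with $n_j^k\ge N$ whose weighted sum is $<\eps\,2^{-k}$. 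Taking the union over $k$ produces an admissible cover of $Z$ for $M_Z(s,r,\phi,N)$ with weighted sum $<\eps$. Letting $\eps\to 0$ gives $M_Z(s,r,\phi,N)=0$ for every $N$, hence $M_Z(s,r,\phi)=0$, so $M_Z(r,\phi)\le s$. Sending $r\to 0$ yields $P_Z(F,\phi)\le s$, and since $s>\sup_k P_{Z_k}(F,\phi)$ was arbitrary, the reverse inequality follows.

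The main obstacle lies in (3): when concatenating the countably many chosen covers one must guarantee that every Bowen ball in the combined cover still satisfies $n_j\ge N$ with a common $N$. This is secured precisely because $M_{Z_k}(s,r,\phi)=0$ forces $M_{Z_k}(s,r,\phi,N)$ to vanish for \emph{every} $N$, not merely in the limit; otherwise the natural choice of covers for each $Z_k$ might require an increasingly large $N_k$ and no uniform lower bound would be available to make the union admissible.
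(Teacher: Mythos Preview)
Your proof is correct and follows the standard route for these Carath\'eodory-type monotonicity and countable-stability properties. The paper itself does not supply a proof of this lemma; it is stated as a list of basic properties and left to the reader, so there is nothing to compare against beyond noting that your argument is exactly the expected one.
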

\bigskip
Next, we give a modification of the classical covering lemma (see Section 2.8.4 in \cite{Fed}).

\begin{lemma}\label{lem:cover} [Lemma 1 in \cite{MW}]
Let $f:X \to X$ be a continuous map on a compact metric space. Let $r>0$  and $\cB(r)=\{B_n(x,r): x \in X, n \in \N\}$. For any family $\cF\subset \cB(r)$ there exists  a subfamily
$\cG \subset \cF$ consisting of disjoint Bowen balls such that
$$
\bigcup_{B_n(x,r) \in \cF} B_n(x,r) \subset \bigcup_{B_n(x,r) \in \cG} B_n(x,3 \cdot r).
$$
\end{lemma}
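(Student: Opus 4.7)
The plan is to adapt the classical Vitali $3r$-covering lemma, treating the time index $n$ of a Bowen ball $B_n(x,r)$ as the analog of the inverse radius in the Euclidean setting. For each $k \in \N$ set $\cF_k := \{B_k(x,r) \in \cF\}$, so that $\cF = \bigcup_{k \in \N} \cF_k$.

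The geometric key is the following observation: if $B_n(x,r) \cap B_m(y,r) \neq \emptyset$ with $m \leq n$, then $B_n(x,r) \subset B_m(y, 3r)$. Indeed, pick any $z$ in the intersection; for $0 \leq j < m \leq n$ both $d(F^j(x), F^j(z)) < r$ and $d(F^j(y), F^j(z)) < r$ hold. Hence for any $w \in B_n(x,r)$ and any $0 \leq j < m$, the triangle inequality gives $d(F^j(y), F^j(w)) \leq d(F^j(y), F^j(z)) + d(F^j(z), F^j(x)) + d(F^j(x), F^j(w)) < 3r$, so $w \in B_m(y, 3r)$. This is the Bowen-ball analog of the classical fact that an intersecting ball of smaller radius lies inside the triple of the larger ball.

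The construction of $\cG$ proceeds by induction on $k$. Let $\cG_1$ be a maximal (under inclusion) pairwise-disjoint subfamily of $\cF_1$, whose existence follows from Zorn's lemma applied to the poset of pairwise-disjoint subfamilies of $\cF_1$. Having fixed $\cG_1, \dots, \cG_{k-1}$, let $\cG_k$ be a maximal pairwise-disjoint subfamily of $\cF_k$ consisting of balls disjoint from every element of $\cG_1 \cup \dots \cup \cG_{k-1}$. Set $\cG := \bigcup_{k=1}^{\infty} \cG_k$; by construction its elements are pairwise disjoint. To verify the covering property, take any $B_n(x,r) \in \cF_n$: if $B_n(x,r) \in \cG_n$, then trivially $B_n(x,r) \subset B_n(x, 3r)$; otherwise, by maximality of $\cG_n$, the ball $B_n(x,r)$ must intersect some $B_m(y,r) \in \cG_1 \cup \dots \cup \cG_n$, and since $m \leq n$, the geometric key yields $B_n(x,r) \subset B_m(y, 3r)$, as desired.

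I do not expect a serious obstacle, as the argument is a direct transcription of the Euclidean Vitali lemma. The only mildly delicate point is the invocation of Zorn's lemma when $\cF$ is uncountable; in the intended application to the pressure construction above, where $\cF$ can be taken countable, Zorn can be replaced by an ordinary greedy selection.
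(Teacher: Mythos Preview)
Your proof is correct. The paper does not supply its own proof of this lemma; it merely states the result and cites Ma \& Wen \cite{MW}, noting that it is a modification of the classical covering lemma from Federer \cite{Fed}. Your argument is precisely that modification: the standard Vitali greedy selection, stratified by the time index $n$ in place of radius, together with the triangle-inequality observation that $B_n(x,r)\cap B_m(y,r)\neq\emptyset$ with $m\le n$ forces $B_n(x,r)\subset B_m(y,3r)$. This is the expected proof and matches what one finds in \cite{MW}.
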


\subsection{Local measure-theoretic pressures} \label{sec:locpressure}
For a continuous map $f:X \to X$ defined on a compact metric space $(X,d)$, a Borel probability measure $\mu$ defined on $X$,
a continuous potential $\phi:X \to \R$ and $x \in X$, we define the {\em upper local measure-theoretic pressure} of $f$ at $x$, with respect to $\mu$,
by
\begin{equation}\label{eq:locpres}
{\overline P}_{\mu}(f,\phi,x):= \lim_{\eps \to 0} \limsup_{n \to \infty} \frac{1}{n} \left(  \sum_{m=0}^{n-1}\phi(f^m(x)) - \log(\mu(B_n(x,\eps))) \right).
\end{equation}
Similarly, we define the {\em lower local measure-theoretic pressure} of $f$ at $x$, with respect to $\mu$, by
$$
{\underline P}_{\mu}(f,\phi,x):= \lim_{\eps \to 0} \liminf_{n \to \infty}\frac{1}{n}
\left( \sum_{m=0}^{n-1}\phi(f^m(x)) - \log(\mu(B_n(x,\eps))) \right).
$$

\begin{remark}\label{rem:CP}
Notice that local measure-theoretic pressures, calculated with respect to the potential $\phi \equiv 0$, coincide with local measure-theoretic Brin-Katok entropies. Carvalho \& P\'erez \cite{CP} showed that {\red if $\mu$ is ergodic,} then ${\overline P}_{\mu}(f,\phi,x) = {\underline P}_{\mu}(f,\phi,x)$ for $\mu$-a.e.\ $x$, and that the Brin-Katok Theorem~\ref{thm:BK} extends to local pressure:
$$
\int_X {\overline P}_{\mu}(f,\phi,x) \, d\mu =
P_\mu(f,\phi) := h_\mu(f) + \int_X \phi \, d\mu.
$$
\end{remark}

We have the following partial variational principle,
extending Ma \& Wen's result Theorem~\ref{thm:MW}.

\begin{theorem} \label{thm:aaMW}
Given a continuous map $f:X \to X$ on a compact metric space $(X,d)$, a Borel subset $Z \subset X$ and a Borel probability measure $\mu$ on $X$, we have
for any $s\ge 0$ and continuous $\phi:X \to \R$:

1) If ${\overline P}_{\mu}(f,\phi,x)\le s$ for all $x \in Z$, then $P_Z(f,\phi) \le s$.

2) If $\mu(Z)>0$ and ${\underline P}_{\mu}(f,\phi,x)\ge s$ for all $x \in Z$, then $P_Z(f,\phi) \ge s$.
\end{theorem}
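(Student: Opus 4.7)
The strategy is to lift Ma \& Wen's argument from the entropy setting ($\phi\equiv 0$) to general continuous potentials by replacing $-\log\mu(B_n(x,\eps))$ with the pressure integrand $S_n\phi(x)-\log\mu(B_n(x,\eps))$, where $S_n\phi(x):=\sum_{m=0}^{n-1}\phi(F^m(x))$. Two tools do the real work: Lemma~\ref{lem:cover} for extracting disjoint Vitali-type subcovers of Bowen balls, and uniform continuity of $\phi$ (through a variation $V(\eps):=\sup\{|\phi(u)-\phi(v)|:d(u,v)\le\eps\}$ which tends to $0$ with $\eps$). A simple but important observation is that, for fixed $x$ and $n$, the integrand $S_n\phi(x)-\log\mu(B_n(x,\eps))$ is monotone non-increasing in $\eps$, so both $\limsup_n$ and $\liminf_n$ are monotone non-decreasing as $\eps\downarrow 0$; consequently the hypotheses translate into bounds that hold uniformly in $n$ (for large $n$) at each fixed scale.

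For part~(1), fix $\delta>0$ and $\eps_0>0$. The hypothesis $\overline P_\mu(F,\phi,x)\le s$ combined with the monotonicity above produces, for each $x\in Z$, a finite threshold $N_x=N_x(\eps_0,\delta)$ with
\[
\mu(B_n(x,\eps_0))\;\ge\;\exp\bigl(S_n\phi(x)-n(s+\delta)\bigr)\qquad\text{for all }n\ge N_x.
\]
For each $N\ge 1$, the family $\{B_{\max(N_x,N)}(x,\eps_0):x\in Z\}$ covers $Z$ by Bowen balls of radius $\eps_0$ and index at least $N$; Lemma~\ref{lem:cover} extracts a pairwise disjoint subfamily $\{B_{n_j}(x_j,\eps_0)\}_j$ whose $3$-dilates still cover $Z$. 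Using the pointwise bound above together with the disjointness and $\mu(X)=1$,
\[
M_Z(s+\delta,3\eps_0,\phi,N)\;\le\;\sum_j\exp\bigl(-(s+\delta)n_j+S_{n_j}\phi(x_j)\bigr)\;\le\;\sum_j\mu(B_{n_j}(x_j,\eps_0))\;\le\;1.
\]
Letting $N\to\infty$ gives $M_Z(s+\delta,3\eps_0,\phi)\le 1<\infty$, hence $M_Z(3\eps_0,\phi)\le s+\delta$; letting $\eps_0\to 0$ and then $\delta\to 0$ concludes $P_Z(F,\phi)\le s$.

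For part~(2), fix $\delta>0$ and use uniform continuity of $\phi$ to pick $\eps_0>0$ small with $V:=V(\eps_0)\le\delta$. Monotonicity together with $\underline P_\mu(F,\phi,x)\ge s$ for all $x\in Z$ shows that
\[
W_{\eps_0}\;:=\;\Bigl\{x\in Z:\liminf_{n\to\infty}\tfrac{1}{n}\bigl(S_n\phi(x)-\log\mu(B_n(x,\eps_0))\bigr)\ge s-\delta\Bigr\}
\]
increases to $Z$ as $\eps_0\downarrow 0$, so (shrinking $\eps_0$ further if necessary) we may assume $\mu(W_{\eps_0})\ge\mu(Z)/2$. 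The sets $W^{(N)}_{\eps_0}:=\{x\in W_{\eps_0}:\mu(B_n(x,\eps_0))\le\exp(S_n\phi(x)-n(s-\delta))\text{ for all }n\ge N\}$ increase to $W_{\eps_0}$, so fix $N_0$ with $\mu(W^{(N_0)}_{\eps_0})\ge\mu(Z)/4$. Given any cover $\{B_{n_j}(y_j,\eps_0/3)\}_j$ of $Z$ with $n_j\ge N\ge N_0$, restrict to indices $J'$ with $B_{n_j}(y_j,\eps_0/3)\cap W^{(N_0)}_{\eps_0}\ne\emptyset$ and pick $x_j$ in the intersection. The Bowen-ball triangle inequality gives $B_{n_j}(y_j,\eps_0/3)\subset B_{n_j}(x_j,\eps_0)$, and since the orbits of $x_j,y_j$ stay $\eps_0$-close for $n_j$ steps, $|S_{n_j}\phi(x_j)-S_{n_j}\phi(y_j)|\le n_j V$. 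Combining these,
\[
\mu(B_{n_j}(y_j,\eps_0/3))\;\le\;\exp\bigl(S_{n_j}\phi(y_j)-n_j(s-\delta-V)\bigr).
\]
Summing over $J'$, using $V\le\delta$, countable subadditivity of $\mu$ and $\bigcup_{j\in J'}B_{n_j}(y_j,\eps_0/3)\supset W^{(N_0)}_{\eps_0}$,
\[
\sum_j\exp\bigl(-(s-2\delta)n_j+S_{n_j}\phi(y_j)\bigr)\;\ge\;\mu(W^{(N_0)}_{\eps_0})\;\ge\;\mu(Z)/4\;>\;0,
\]
so $M_Z(s-2\delta,\eps_0/3,\phi,N)\ge\mu(Z)/4$ and thus $M_Z(\eps_0/3,\phi)\ge s-2\delta$. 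Sending $\eps_0\to 0$ (so that $V\to 0$ automatically) and $\delta\to 0$ yields $P_Z(F,\phi)\ge s$.

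The chief obstacle is in part~(2): the cover centres $y_j$ are not assumed to lie in the good set $W^{(N_0)}_{\eps_0}$, so one must pass to auxiliary witnesses $x_j\in W^{(N_0)}_{\eps_0}\cap B_{n_j}(y_j,\eps_0/3)$ and then absorb the mismatch $|S_{n_j}\phi(x_j)-S_{n_j}\phi(y_j)|$ into the pressure parameter through the $\phi$-variation $V$. The factor-of-$3$ inflation $B_{n_j}(y_j,\eps_0/3)\subset B_{n_j}(x_j,\eps_0)$ is exactly what makes the handoff between the two centres possible; without it, the Birkhoff sums and measures would refer to incompatible scales. In the entropy case $\phi\equiv 0$ this handoff costs nothing, which is why Ma \& Wen did not have to deal with it.
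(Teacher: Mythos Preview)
Your proof is correct and follows the same Billingsley--Ma--Wen strategy as the paper, but with two differences worth noting. In part~(1), your monotonicity observation (that the integrand is non-increasing in $\eps$, so the $\limsup$ at any fixed scale $\eps_0$ is already $\le s$) lets you work directly on $Z$; the paper instead decomposes $Z=\bigcup_k Z_k$ according to the scale below which the $\limsup$ is controlled, and then invokes Lemma~\ref{lem:basprop}(3). Your route is shorter and equivalent. In part~(2), you explicitly handle the possibility that the centres $y_j$ of a near-optimal cover need not lie in the good set: you re-centre at witnesses $x_j\in W^{(N_0)}_{\eps_0}\cap B_{n_j}(y_j,\eps_0/3)$, use the inclusion $B_{n_j}(y_j,\eps_0/3)\subset B_{n_j}(x_j,\eps_0)$, and absorb $|S_{n_j}\phi(x_j)-S_{n_j}\phi(y_j)|\le n_jV(\eps_0)$ into the pressure parameter. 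The paper's write-up assumes the cover centres lie in $V_{k_0,N_0}$, which is not guaranteed by the definition of $M_Z$; your re-centering argument is exactly what is needed to make that step rigorous, and it is precisely the point where a non-zero potential $\phi$ forces extra care beyond the entropy case.
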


\begin{proof} (1)  Fix a Borel subset $Z \subset X$, a Borel probability measure $\mu$, a continuous potential $\phi:X \to \R$ and $\eps >0$. Assume that there exists $s\ge 0$ such that $ {\overline P}_{\mu}(f,\phi,x)\le s$ for every $x \in Z$. Define a sequence of sets $(Z_k)_{k \in \N}$ by
$$
Z_k:=\left\{x \in Z: \limsup_{n \to \infty} \frac{1}{n}
\left( \sum_{m=0}^{n-1}\phi(f^m(x)) - \log(\mu(B_n(x,r))) \right) \le s+\eps \text{ for all } 0 < r < \frac{1}{3k} \right\}.
$$
Then $Z=\bigcup_{k \in \N} Z_k$. Now we fix $k \in \N$ and $r\in (0,\frac{1}{3k})$. The definition of $Z_k$ yields that for any $x \in Z_k$, there exists a strictly increasing sequence $(n_j(x))_{j \in \N}$ with
$$
\sum_{m=0}^{{n_j(x)}-1}\phi(f^m(x))
- \log(\mu(B_{n_j(x)}(x,r)))  \le (s+\eps)\cdot n_j(x).
$$
Notice that $Z_k$ is contained in the union of elements of the family 
$$
\cF_N:=\{B_{n_j(x)}(x,r)):x \in Z_k,~n_j(x)\ge N, j \in J\}.
$$
By Lemma~\ref{lem:cover}, there exists a subfamily $\cG_N$ of $\cF_N$, by pairwise disjoint Bowen balls,
$$
\cG_N:=\{B_{n_i(x_i)}(x_i,r)):x_i \in Z_k,~n_i(x)\ge N, i \in I\},
$$
where $I \subset J$ such that
$Z_k \subset \bigcup_{i \in I} B_{n_i(x_i)}(x_i,3\cdot r)$
and 
$$
\mu(B_{n_i(x_i)}(x_i,r)) \ge   \exp\left(   -(s+\eps)n_i(x_i)          +  \sum_{m=0}^{{n_i(x_i)}-1}\phi(f^m(x_i))  \right).
$$
Therefore,
\begin{eqnarray*}
M_{Z_k}(s+\eps,3\cdot r, \phi, N) &\le& \sum_{i\in I}  \exp\left(  -(s+\eps)n_i(x_i)  +  \sum_{m=0}^{{n_i(x_i)}-1}\phi(f^m(x_i)) \right) \\
&\le& \sum_{i\in I} \mu(B_{n_i(x_i)}(x_i,r)))\le 1,
\end{eqnarray*}
where the last inequality follows by the disjointness of Bowen balls in the family $\cG_N$.
Passing to the limit $N \to \infty$,
we get
$$
M_{Z_k}(s+\eps, 3\cdot r, \phi) \le 1.
$$
This means that $M_{Z_k}( 3\cdot r, \phi)\le s+\eps$. As $r \to 0$, we obtain $P_{Z_k}(f,\phi) \le s+\eps$.
Finally, by Lemma~\ref{lem:basprop}, we obtain
$$
P_{Z}(f,\phi)=\sup\{P_{Z_k}(f,\phi): k \in \N \}    \le s+\eps,
$$
and consequently
$$
P_{Z}(f,\phi)=\sup\{P_{Z_k}(f,\phi): k \in \N \} \le s,
$$
since $\eps>0$ is arbitrarily small.

\bigskip
(2) Fix $Z \subset X$ such that $\mu(Z)>0$ and $s\ge 0$. Assume that ${\underline P}_{\mu}(f,\phi,x) \ge s$ for all $x \in Z$.
Fix $\eps >0$ and for any $k \in \N$ define
$$
V_k:=\left \{x \in Z: \liminf_{n \to \infty} \frac{1}{n}\left( \sum_{m=0}^{n-1}\phi(f^m(x)) - \log(\mu(B_n(x,r))) \right) \ge s-\eps, \text{ for every } 0 < r < \frac{1}{k} \right \}.
 $$
Since the sequence of sets $(V_k)_{k \in \N}$ increases to $Z$, by continuity of the Borel probability measure $\mu$ we get
$$
\lim_{k \to \infty} \mu(V_k)=\mu(Z) >0.
$$
There exists $k_0 \in \N$ such that $\mu(V_{k_0}) > \frac{1}{2} \mu(Z)>0$.
Now define a sequence of sets $(V_{k_0, N})_{N \in \N}$ by
$$
V_{k_0, N}:=\left\{x \in Z:  \frac{1}{n}\left(  \sum_{m=0}^{n-1}\phi(f^m(x))- \log(\mu(B_n(x,r))) \right) \ge s-\eps, \text{ for all } n \ge N, 0 < r < \frac{1}{k_0}  \right\}.
$$
Notice that the sequence $(V_{k_0, N})_{N \in \N}$ increases to $V_{k_0}$, so by continuity of the Borel probability measure $\mu$, there exists $N_0 \in \N$ such that $\mu(V_{k_0, N_0}) > \frac{1}{2}\mu(V_{k_0})>0$.
By definition of $V_{k_0, N_0}$, for any $x \in V_{k_0, N_0}$, $n_j \ge N_0$ and $r \in(0, \frac{1}{k_0})$, we have
$$
\sum_{m=0}^{n_j-1}\phi(f^m(x)) - \log(\mu(B_{n_j}(x,r))) > (s-\eps) \cdot n_j,
$$
so
$$
\mu(B_{n_j}(x,r)) < \exp\left[ -(s-\eps) \cdot n_j + \sum_{m=0}^{n_j-1}\phi(f^m(x))\right].
$$
Next, consider a countable cover $\cC$ of $V_{k_0, N_0}$ by Bowen balls, defined by
$$
\cC:=\left\{   B_{n_j}(y_j, r): y_j \in V_{k_0, N_0},~n_j \ge N_0,~r \in (0, \frac{1}{k_0}),~B_{n_j}(y_j, r) \cap V_{k_0, N_0} \neq \emptyset, j\in J  \right\}.
$$
Notice that for any $\delta \in (0,\frac{1}{2}\mu(V_{k_0, N_0}))$, there exists a countable family $J_{\delta}$ such that
\begin{eqnarray*}
M_{V_{k_0, N_0}}(s-\eps, r,\phi,N)
&\ge& -\delta + \sum_{j \in J_{\delta}}  \exp\left( -(s-\eps) \cdot n_j  + \sum_{m=0}^{n_j-1}\phi(f^m(x_j)) \right) \\
&\ge& -\delta+ \sum_{j \in J_{\delta}} \mu(B_{n_j}(y_j, r) )\ge -\delta +\mu(V_{k_0, N_0}) \ge \frac{1}{2}\mu(V_{k_0, N_0}).
\end{eqnarray*}
Since $V_{k_0, N_0} \subset Z$, for any potential $\phi: X \to \R$ and $N \in \N$, Lemma~\ref{lem:basprop} gives
$$
M_Z(s-\eps, r,\phi,N) \ge M_{V_{k_0, N_0}}(s-\eps, r,\phi,N)  \ge \frac{1}{2}\mu(V_{k_0, N_0})>0.
$$
Passing to the limit $N \to \infty$, we get $M_Z(s-\eps, r,\phi)>0$.
This means that $M_Z( r,\phi)>s-\eps$.
Finally, passing to the limit $r\to 0$ and taking into account that $\eps$ is arbitrarily small, we obtain
$P_Z(f,\phi)\ge s$. The proof is complete.
\end{proof}

\subsection{The \tl version of measure-theoretic local pressure}
\label{sec:aapressure}
{\blue In this section we introduce a  \tl version of the upper measure-theoretic local pressure, which is a modification of ${\overline P}_{\mu}(f,\phi,z)$ from~\eqref{eq:locpres},
in the following way. }
\begin{equation*}\label{eq:aapres}
\overline{P}_{\mu,\aa}(f,\phi, z) := \limsup_{n\to\infty} \frac1n \left( \sum_{m=0}^{n-1} \phi(f^m(z))- \log \mu(B(z,e^{-\aa n})) \right),
\end{equation*}
and an analogous replacement for lower measure-theoretic local pressure.
\begin{equation*}\label{eq:aapres2}
\underline{P}_{\mu,\aa}(f,\phi, z) := \liminf_{n\to\infty} \frac1n \left( \sum_{m=0}^{n-1} \phi(f^m(z))- \log \mu(B(z,e^{-\aa n})) \right).
\end{equation*}
The topological \tl pressure itself we will define in the spirit of \cite{Pes}.
That is, define
$$
M_{f,Z,\aa}(s,\phi,N):=\inf\left\{\sum_{j \in J} \exp[-s\cdot n_j + \sum_{m=0}^{n_j-1}\phi(f^m(x_j))]: Z \subset \bigcup_{j \in J} B(x_j, e^{-\aa n_j});~n_j \ge N\right\},
$$
where the infimum is over all finite or countable covers of $Z$ by metric balls $\{ B(x_j, e^{-\aa n_j})\}_{j \in J}$ with $n_j \ge N$.
Let
$$
\overline{M}_{f,Z,\aa}(s,\phi) := \limsup_{N \to \infty} M_{f,Z,\aa}(s,\phi,N),
$$
$$
\underline{M}_{f,Z,\aa}(s,\phi) := \liminf_{N \to \infty} M_{f,Z,\aa}(s,\phi,N).
$$

\begin{lemma}
The graph of the function $s \mapsto \overline{M}_{f,Z,\aa}(s,\phi)$ has a unique critical parameter $s_0$, where the function
$s \mapsto \overline{M}_{f,Z,\aa}(s,r,\phi)$ drops from $\infty$ to $0$.
The graph of the function $s \mapsto \underline{M}_{f,Z,\aa}(s,r,\phi)$ has a unique critical parameter $s_1$, where the function
$s \mapsto \underline{M}_{f,Z,\aa}(s,\phi)$ drops from $\infty$ to $0$.
\end{lemma}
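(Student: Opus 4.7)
The plan is to run a standard Carathéodory-dimension sharp-transition argument in the style of \cite{Pes}, adapted to the \tl setting. The essential input is a clean comparison of the set-functions $M_{F,Z,\aa}(s,\phi,N)$ at two different scaling parameters $s'>s$; the rest is bookkeeping.

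First I would record the two trivial monotonicities. For fixed $Z$, $\phi$, $\aa$: the quantity $M_{F,Z,\aa}(s,\phi,N)$ is non-increasing in $s$ (each summand shrinks) and non-decreasing in $N$, because enlarging $N$ restricts the admissible family of covers and the infimum over a smaller class can only grow. The $N$-monotonicity means the sequence $\{M_{F,Z,\aa}(s,\phi,N)\}_N$ is monotone, so
$$\overline M_{F,Z,\aa}(s,\phi)\;=\;\underline M_{F,Z,\aa}(s,\phi)\;=\;\lim_{N\to\infty} M_{F,Z,\aa}(s,\phi,N)\;\in\;[0,\infty].$$
Consequently it will be enough to locate a critical $s_0$ for $\overline M_{F,Z,\aa}(\cdot,\phi)$; the value $s_1$ for $\underline M_{F,Z,\aa}(\cdot,\phi)$ will coincide with it.

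The decisive step is a pointwise factorisation: for any admissible cover $\{B(x_j,e^{-\aa n_j})\}_{j\in J}$ of $Z$ with $n_j\ge N$ and any $s'>s$,
$$\sum_{j\in J}\exp\!\Bigl(-s'n_j+\sum_{m=0}^{n_j-1}\phi(F^m(x_j))\Bigr)\;\leq\;e^{-(s'-s)N}\sum_{j\in J}\exp\!\Bigl(-s\,n_j+\sum_{m=0}^{n_j-1}\phi(F^m(x_j))\Bigr),$$
because $n_j\ge N$ forces $e^{-(s'-s)n_j}\le e^{-(s'-s)N}$ in every term. Taking the infimum over admissible covers on both sides produces the key inequality
$$M_{F,Z,\aa}(s',\phi,N)\;\leq\;e^{-(s'-s)N}\,M_{F,Z,\aa}(s,\phi,N).$$
From this the jump behaviour follows. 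If $\overline M_{F,Z,\aa}(s,\phi)=C<\infty$, then by the $N$-monotonicity $M_{F,Z,\aa}(s,\phi,N)\le C$ for every $N$, and so $M_{F,Z,\aa}(s',\phi,N)\le Ce^{-(s'-s)N}\to 0$ for $s'>s$, giving $\overline M_{F,Z,\aa}(s',\phi)=0$. Contrapositively, $\overline M_{F,Z,\aa}(s',\phi)>0$ forces $\overline M_{F,Z,\aa}(s,\phi)=\infty$ for every $s<s'$. Setting
$$s_0\;:=\;\inf\{s\ge 0:\overline M_{F,Z,\aa}(s,\phi)=0\}\;=\;\sup\{s\ge 0:\overline M_{F,Z,\aa}(s,\phi)=\infty\}$$
delivers the unique critical parameter, with $\overline M_{F,Z,\aa}(\cdot,\phi)=\infty$ on $[0,s_0)$ and $\overline M_{F,Z,\aa}(\cdot,\phi)=0$ on $(s_0,\infty)$; by the first paragraph the same $s_0$ works as the critical value $s_1$ for $\underline M_{F,Z,\aa}$.

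I do not anticipate a serious obstacle: the only substantive step is spotting the right factorisation, which only uses the \emph{uniform} lower bound $n_j\ge N$ on the scale indices, not any geometric property of the balls $B(x_j,e^{-\aa n_j})$. The remaining routine points are to check that both regimes are non-empty in the non-degenerate case (handled by $s=0$ making the sum diverge up to a $\phi$-dependent shift, and $s$ larger than any uniform exponential growth rate of admissible covers forcing the sum to collapse) and to note that the arguments are symmetric between $\overline M$ and $\underline M$ precisely because of the $N$-monotonicity in the first paragraph.
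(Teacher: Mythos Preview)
Your argument is correct and is precisely the standard Carath\'eodory transition argument the paper invokes by citing Proposition~1.2 of \cite{Pes}; the key factorisation $M_{F,Z,\aa}(s',\phi,N)\le e^{-(s'-s)N}M_{F,Z,\aa}(s,\phi,N)$ is exactly what drives that proof. One small bonus: your first paragraph actually shows more than the lemma claims, since the $N$-monotonicity of $M_{F,Z,\aa}(s,\phi,N)$ forces $\overline M_{F,Z,\aa}=\underline M_{F,Z,\aa}$ and hence $s_0=s_1$, a point the paper does not make explicit.
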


\begin{proof} Proof of the lemma is similar to the proof of Proposition 1.2 in \cite{Pes}.
\end{proof}

Thus, we define
$$
\overline{P}_{Z,\aa}(f, \phi):= \sup \{s \ge 0:  \overline{M}_{f,Z,\aa}(s,\phi)=\infty \}
= \inf\{s \ge 0: \overline{M}_{f,Z,\aa}(s,\phi) =0\} = s_0,
$$
$$
\underline{P}_{Z,\aa}(f, \phi):= \sup \{s \ge 0:  \underline{M}_{f,Z,\aa}(s,\phi)=\infty \}
= \inf\{s \ge 0: \underline{M}_{f,Z,\aa}(s,\phi) =0\} = s_1.
$$

A metric space $(X,d)$ is called boundedly compact if all bounded closed subsets of X are compact. In particular $\R^n$ and Riemannian manifolds (see \cite[p.9]{Gro07}) are boundedly compact.

\begin{lemma}\label{lem:cover2}(Vitali Covering Lemma, Theorem 2.1 in \cite{Ma95}).  Let $(X,d)$ be a boundedly compact metric space and  a family of closed balls $\cB= \{ \overline{B(x,r)} : x \in X, r>0\}$ in X such that
$$
\sup \{diam(B(x,r)) : B(x,r) \in \cB  \} < \infty.
$$
Then, there is a finite or countable sequence $B(x_i,r_i) \in \cB$, indexed by $i \in I$, of disjoint balls such that
$$
\bigcup_{B(x,r) \in \cB} B(x,r) \subset \bigcup_{i \in I}
\overline{ B(x_i,5\cdot r_i) }.
$$

\end{lemma}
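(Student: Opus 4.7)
The plan is to adapt the classical greedy-selection proof of the $5r$-covering lemma to the boundedly compact setting. The driving idea is to stratify the balls in $\cB$ by their radii and then, working from the largest down to the smallest, select a maximal disjoint subfamily, so that any ball that fails to be selected must meet an already-selected ball of comparable or larger size; the factor $5$ in the conclusion then drops out of the triangle inequality.

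Concretely, let $R := \sup\{\, r : \overline{B(x,r)} \in \cB\,\}$, which is finite by hypothesis. For $n \geq 0$ set
$$
\cB_n := \{\, \overline{B(x,r)} \in \cB : R/2^{n+1} < r \leq R/2^n \,\}.
$$
I would then build $\cG_n \subset \cB_n$ by induction on $n$: take $\cG_0$ to be a maximal pairwise disjoint subfamily of $\cB_0$ (using Zorn's lemma), and given $\cG_0,\dots,\cG_{n-1}$, let $\cG_n$ be a maximal pairwise disjoint subfamily of the set of balls in $\cB_n$ disjoint from every ball already chosen. Finally, set $\cG := \bigcup_n \cG_n$, and index its elements as $\{\overline{B(x_i, r_i)}\}_{i \in I}$.

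It remains to check (i) that $\cG$ is at most countable and (ii) that the enlarged balls cover $\bigcup_{B \in \cB} B$. For (i), bounded compactness makes $X$ $\sigma$-compact, hence separable; since all balls in $\cG$ are pairwise disjoint and each contains an interior point, the collection $\cG$ must be countable. For (ii), fix $\overline{B(x,r)} \in \cB$ with $r \in (R/2^{n+1}, R/2^n]$. By the maximality built into the construction at stage $n$, there must exist some $\overline{B(x_i, r_i)} \in \cG_0 \cup \cdots \cup \cG_n$ with $\overline{B(x,r)} \cap \overline{B(x_i, r_i)} \neq \emptyset$, and necessarily $r_i > R/2^{n+1} \geq r/2$. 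Picking $y$ in the intersection and using the triangle inequality, for any $z \in \overline{B(x,r)}$ one has $d(z, x_i) \leq d(z,x) + d(x,y) + d(y,x_i) \leq 2r + r_i \leq 4r_i + r_i = 5 r_i$, so $\overline{B(x,r)} \subset \overline{B(x_i, 5 r_i)}$.

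The main obstacle is the use of the maximality principle in a purely metric setting: unlike in $\R^n$, we do not have Lebesgue measure to run the usual inductive selection that picks a ball of almost-maximal radius at each step, so some invocation of Zorn's lemma (or equivalently a transfinite induction exhausting $\cB_n$) is needed at each stratum. Once the maximal disjoint subfamilies exist, the combinatorics of the $5r$-expansion is essentially automatic, and countability follows from the separability consequence of bounded compactness rather than from the selection procedure itself.
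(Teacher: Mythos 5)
Your proof is correct. Note that the paper itself offers no proof of Lemma~\ref{lem:cover2}: the statement is quoted from Mattila \cite[Theorem 2.1]{Ma95}, and your argument is essentially the standard proof of that cited theorem --- dyadic stratification of the balls by size, maximal pairwise disjoint subfamilies chosen stratum by stratum via Zorn's lemma, and the factor $5$ extracted from the triangle inequality together with $r<2r_i$; bounded compactness enters only through $X=\bigcup_{n}\overline{B(x_0,n)}$, which gives $\sigma$-compactness, hence separability, hence countability of any pairwise disjoint family of balls. One small point to repair: the hypothesis as printed bounds the \emph{diameters} of the balls, not their radii, so your $R=\sup\{r\}$ is not finite ``by hypothesis'' (in a bounded space a ball of enormous radius still has small diameter, so the dyadic bands would be ill-defined). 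The fix is one line: if $\operatorname{diam}\overline{B(x,r)}\le D$ for every member of the family, then $\overline{B(x,r)}=\overline{B(x,\min(r,D))}$ as sets, so you may trim every radius to at most $D$ before running your selection, and restoring the original radii only enlarges the $5$-fold balls in the conclusion. In the paper's application the radii are $e^{-\omega n_j}\le 1$ anyway, so this point never arises there.
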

Properties analogous to those of Lemma~\ref{lem:basprop} hold for this version of topological pressure.

\begin{theorem}\label{thm:tlMW}
Given a continuous map $f:X \to X$ on a compact metric space $(X,d)$, a Borel subset $Z \subset X$ and a Borel probability measure $\mu$ on $X$, we have
for any $s\ge 0$ and continuous $\phi:X \to \R$:

1) If $ {\overline P}_{\mu,\aa}(f,\phi,x) \le s$ for all $x \in Z$, then $\overline{P}_{Z,\aa}(f,\phi) \le s$.
{\blue

2) If $\mu(Z)>0$ and ${\underline P}_{\mu,\aa}(f,\phi,x)\ge s$ for all $x \in Z$, then $\underline{P}_{Z,\aa}(f,\phi) \ge s$. }
\end{theorem}

\begin{proof} (1)  Fix a Borel subset $Z \subset X$, a Borel probability measure $\mu$, a continuous potential $\phi:X \to \R$ and $\eps >0$. Assume that there exists $s\ge 0$ such that $ {\overline P}_{\mu,\aa}(f,\phi,x)\le s$ for every $x \in Z$.
Therefore, for $x \in Z$, there exists a strictly increasing sequence $(n_j(x))_{j \in \N}$ with
$$
\sum_{m=0}^{{n_j(x)}-1}\phi(f^m(x))
- \log(\mu(B(x,e^{-\aa n_j(x)})))  \le (s+\eps)\cdot n_j(x).
$$
Notice that $Z$ is contained in the union of elements of the family
$$
\cF_N:=\{B(x_j,e^{-\aa n_j(x_j)}):x_j \in Z,~n_j(x_j)\ge N, j \in J\}.
$$
Choose $n_0$ minimal such that $e^{\aa n_0} \geq 5$.
By Lemma~\ref{lem:cover2} there exists a subfamily $\cG_N$ of $\cF_N$, indexed by elements of a set $I$, by pairwise disjoint metric balls,
$$
\cG_N:=\{B(x_i,e^{-\aa n_i(x_i)})):x_i \in Z,~n_i(x_i)\ge N, i \in I\},
$$
where $I \subset J$, such that
$Z \subset \bigcup_{i \in I} B(x_i,e^{-\aa (n_i(x_i)-n_0)}))$
and
$$
\mu(B(x_i,e^{-\aa n_i(x_i)}))) \ge  \exp\left(  -(s+\eps)n_i(x_i)          +  \sum_{m=0}^{{n_i(x_i)}-1}\phi(f^m(x_i))  \right).
$$
Therefore,
\begin{eqnarray*}
M_{Z,\aa}(s+\eps, \phi, N+n_0) &\le& \sum_{i\in I}  \exp\left(  -(s+\eps)[n_i(x_i) - n_0 ]+  \sum_{m=0}^{{n_i(x_i)}-n_0-1}\phi(f^m(x_i)) \right) \\
&\le&\exp[(s+\eps)\cdot n_0] \sum_{i\in I}  \exp\left(  -(s+\eps)n_i(x_i)  +  \sum_{m=0}^{{n_i(x_i)}-1}\phi(f^m(x_i)) \right) \\
&\le&\exp[(s+\eps)\cdot n_0]  \sum_{i\in I} \mu(B(x_i,e^{-\aa n_i(x_i))} ))\le \exp[(s+\eps)\cdot n_0],
\end{eqnarray*}
where the last inequality follows by the disjointness of metric balls in the family $\cG_N$.
Taking the $\limsup$ as $N \to \infty$,
we get
$$
\overline{M}_{Z,\aa}(s+\eps, \phi) \le \exp[(s+\eps)\cdot n_0] <\infty
$$
and therefore
$$
\overline{P}_{Z,\aa}(f,\phi) \le s+\eps.
$$
Since $\eps>0$ is arbitrarily small, $\overline{P}_{Z,\aa}(f,\phi) \le s$.

\bigskip
(2) Fix $Z \subset X$ such that $\mu(Z)>0$ and $s\ge 0$. Assume that ${\underline P}_{\mu,\aa}(f,\phi,x) \ge s$ for all $x \in Z$.
Fix $\eps >0$ and for any $N \in \N$, define
$$
V_N := \left\{ x \in Z:  \frac{1}{n}\left(  \sum_{m=0}^{n-1}\phi(f^m(x))- \log(\mu(B(x,e^{-\aa n}))) \right) \ge s-\eps, \text{ for all } n \ge N\right\}.
$$
Notice that the sequence $(V_N)_{N \in \N}$ increases to $Z$, so by continuity of the Borel probability measure $\mu$ there exists $N_0 \in \N$ such that $\mu(V_{N_0}) > \frac{1}{2}\mu(Z)>0$.
By definition of $V_{N_0}$, for any $x \in V_{N_0}$ and $n_j \ge N_0$, we have
$$
\sum_{m=0}^{n_j-1}\phi(f^m(x)) - \log(\mu(B(x_j,e^{-\aa n_j}))) > (s-\eps) \cdot n_j,
$$
so
$$
\mu(B(x,e^{-\aa n_j}))) < \exp\left[ -(s-\eps) \cdot n_j + \sum_{m=0}^{n_j-1}\phi(f^m(x))\right].
$$
Next, consider a countable cover $\cC$ of   $V_{N_0}$ by metric balls, defined by
$$
\cC:=\left\{   B(y_j, e^{-\aa n_j}): y_j \in V_{N_0},~n_j \ge N_0,~B(y_j, e^{-\aa n_j}) \cap V_{N_0} \neq \emptyset, j\in J  \right\}.
$$
Notice that for any $\delta \in (0,\frac{1}{2}\mu(V_{N_0}))$, there exists a countable family $J_{\delta}$ such that
\begin{eqnarray*}
M_{V_{N_0},\aa}(s-\eps, \phi,N)
&\ge& -\delta + \sum_{j \in J_{\delta}}  \exp\left( -(s-\eps) \cdot n_j  + \sum_{m=0}^{n_j-1}\phi(f^m(x_j)) \right) \\
&\ge& -\delta + \sum_{j \in J_{\delta}} \mu(B(x_j,e^{-\aa n_j})) )\ge -\delta +\mu(V_{N_0}) \ge \frac{1}{2}\mu(V_{N_0}).
\end{eqnarray*}
Since $V_{k_0, N_0} \subset Z$, for any potential $\phi: X \to \R$ and $N \in \N$, we get
$$
M_{Z,\aa}(s-\eps, \phi,N) \ge M_{V_{N_0}}(s-\eps,\phi,N)  \ge \frac{1}{2}\mu(V_{N_0})>0.
$$
Taking the $\liminf$ as $N \to \infty$, we get $\underline{M}_{Z,\aa}(s-\eps, \phi)>0$.
This means that $\underline{M}_{Z,\aa}(\phi)>s-\eps$.
Finally taking into account that $\eps$ is arbitrarily small, we obtain
$\underline{P}_{Z,\aa}(f,\phi)\ge s$. The proof is complete.
\end{proof}

\section{Discussion on local entropy}\label{sec:discussion}

In the literature on dynamical systems there are several attempts to
define local entropy. Bowen \cite{Bow} introduced the notion of $\eps$-entropy to get an upper estimate of the difference
between $h_{\mu}(f)$ and the entropy of a partition  with diameter less than $\eps$.
Another notion of local entropy (called conditional topological entropy) was given by Misiurewicz \cite{Mis}, who proved that vanishing of his local entropy implies existence of maximal measure. Finding (uniqueness of) measures of maximal entropy was also
the reason for Newhouse \cite{N89} and Buzzi \& Ruette \cite{Buz97,  BR} to define a version the local entropy $\hloc(f)$, see~\eqref{eq:Buz}.
Buzzi \& Ruette \cite{BR} proved that for
$C^1$ maps $f:[0,1]\to [0,1]$ with critical set $\Crit(f) = \{ x \in [0,1]: f|_U \text{ is not monotone on each neighborhood } U \owns x\}$,
if $\htop(f) > \htop (f, \Crit(f))+\hloc(f)$, then $f$ admits a measure of maximal entropy.

\subsection{Orbit complexity}
 Brudno \cite{Br82} introduced the notion of orbit complexity for a homeomorphism $f:X \to X$ of a compact space $X$ and related it to measure-theoretic entropy and topological entropy. In Brudno's approach the orbit complexity is a measure of the amount of information that is necessary to describe the orbit. The complexity $K(x,f)$ of the orbit of $x \in X$ is defined in the spirit of asymptotically optimal functions in the sense of Kolmogorov; it indicates the limit proportion of $n$-prefixes
 of the symbolic itinerary of $x$ needed to reconstruct the whole $n$-prefix. 
 In his Theorem 3.1, which slightly predates what we state as the Brin-Katok Theorem~\ref{thm:BK} in Section~\ref{sec:BK}, Brudno proved the following result: If an $f$-invariant measure $\mu$ is ergodic, then\footnote{Entropy is computed with respect to logarithmic base $b$ where $b$ is the cardinality of the alphabet used for the symbolic itineraries.} $K(x,f)=h_{\mu}(f)$ for $\mu$-a.e.\ $x \in X$. Theorem 3.2 in \cite{Br82} says that for any point $x\in X$ one has $K(x,f)\leq h_f(x)$, where $h_f(x) := \sup\{ h_\mu(f) : \mu \text{ is a weak accumulation point of } (\frac1n \sum_{k=0}^{n-1} \delta_{f^k(x)})_{n\geq 1} \}$ is the entropy of Kamae introduced in \cite{Ka73}.

The results of Brudno were extended by White \cite{Wh93}, who proved that for any ergodic $f$-invariant measure $\mu$, $\mu$-a.e.\ orbit has a limiting complexity equal to $\htop(f)$ Also, if $f:X \to X$ is minimal, then the set of points with orbit upper complexity equal to the topological entropy is a residual set. If $f:X \to X$ has a unique measure $\mu_0$ of maximal entropy, then any point with lower complexity equal to $\htop(f)$, is generic for the measure $\mu_0$. This is likely to hold also if each measure of maximal entropy is fully supported as in {\red Corollary~\ref{cor:hconstant}.}

Another extension of results of Brudno \cite{Br82} one can find in the paper by Galatolo \cite{Ga00}, who considered a dynamical system given by a continuous map 
on a separable metric space, and introduced a notion of computable structure  on a metric space with a new notion of orbit complexity with respect to this structure. For compact spaces, Galatolo's definition coincides with the definition given by Brudno.

A different way of using single orbits to estimate entropy goes back to time series analysis, and is known as Takens' estimator in applied dynamics and fractal dimension estimation, see e.g.\ \cite{Takens, Theiler}. Viewing orbits $Y := (f^n(x_0))_{n\geq 0}$
for $x_0 \in X$
as time series, one can define $k \times \dim(X)$-dimensional reconstruction vectors
$(x_n, x_{n+1}, \dots , x_{n+k-1})$ and estimate the probability
$\bP_\eps^k(Y)$ of finding two such vectors at mutual distance $\leq \eps$.
The entropy is the logarithmic growth rate of this quantity as $\eps \to 0$:
$h(Y) = \lim_{\eps\to 0} \lim_{k \to \infty} \frac1k \log \bP^k_\eps(Y)$.
Takens' estimator is a maximum {\blue likehood }estimator of this quantity,
which is also applicable to arbitrary time series, i.e., not just for orbits only.

\subsection{Entropy pairs and tuples}
In 1993, Blanchard \cite{Bl93} introduced the notion of entropy pairs, in order to localize ``where''  in the system entropy is generated.
This was generalized to entropy $n$-tuples and entropy sets by
Huang \& Ye \cite{HY06}, {\red Dou, Ye \& Zhang \cite{DYZ06}} and Blanchard \& Huang \cite{BH}.

\begin{definition}\label{def:entropy_set}
Let $f:X \to X$ be a continuous map on a compact space $X$.
 A set $E \subset X$ is an {\em entropy set}
 if $\# E \geq 2$, and if for each open cover $\cU = \{U_1, \dots, U_n\}$ of $X$ with the property that for each $i$ there is $x \in E \setminus \overline{U_i}$, the topological entropy
 $\htop(f,\cU)$ (in the sense of Adler-Konheim-McAndrew \cite{AKM}) of the cover is positive.
 An entropy set of $n$ points is an {\em entropy $n$-tuple}, so an {\em entropy pair} is an entropy $2$-tuple.
\end{definition}

 Dou, Ye \& Zhang \cite{DYZ06} characterize maximal entropy sets, and prove that $E$ is an entropy set if and only if each $n$-tuple in $E$ with at least two distinct point is an entropy $n$-tuple.

The survey paper  of Glasner \& Ye \cite{GY09} is largely on the relation between the topological and ergodic approach to dynamical systems from the global point of view. In particular, they discuss topological and measure entropy pairs and $n$-tuples and present the relation between the topological entropy of open covers and the measure-theoretic entropy of finer partitions. The second part of the survey is on local properties of dynamical systems.
They introduce  $n$-tuples for an invariant measure, and establish their relation to topological entropy $n$-tuples. Glasner \& Ye \cite{GY09} discuss in detail the notion of entropy set,
in order to determine where the entropy is concentrated, {\red including the result of Dou, Ye and Zhang \cite[Corollary 3.3]{DYZ06}},
namely that any topological dynamical system with positive entropy admits an entropy set with infinitely many points. Moreover, for any topological dynamical system, there exists a compact countable subset such that its {\red $(n,\eps)$-separated set entropy} is equal to the entropy of the whole system, see \cite{YZ07}.

Glasner \& Ye discuss notions of completely positive entropy and  uniform positive entropy, introduced by Blanchard \cite{Bl92} in a topological context:
A topological dynamical system has {\em completely
positive entropy} if every non-trivial factor has positive topological entropy, and it has {\em uniform positive entropy}
if the topological entropy of every non-dense finite open cover is positive.
Uniform positive entropy implies weak mixing as well as completely positive entropy, which in turn implies the existence of a fully supported invariant measure.

\subsection{Neutralized entropy}
The paper by Garcia-Ramos \& Li \cite{GL24} is also a survey about recent developments in the local entropy theory for topological dynamical systems; they cover similar material as Glasner \& Ye, but for general (continuous) group actions.

In 1992, Thieullen \cite{T92} introduced a local form of entropy which he called $\alpha$-entropy and which is close to our approach of \tl entropy.
In his definition, the $\eps$ in the Bowen $(n,\eps)$-balls is replaced by an $n$-dependent quantity $e^{-\alpha n}$, but unlike our \tl entropy, he still uses Bowen balls.
Ben Ovadia \& Rodriguez-Hertz \cite{OR24} introduced {\em neutralized} (local) entropy as
 \begin{equation}\label{eq:neut}
\cE_\mu(x)  := \lim_{\alpha\to 0}\limsup_{n\to\infty} -\frac1n \log \mu(B_n(x, e^{-\alpha n})).
 \end{equation}
 This is the $\alpha$-entropy where eventually the limit $\alpha \to 0$ is taken.
Ben Ovadia \& Rodriguez-Hertz study the role it can play in hyperbolic dynamics and
compare it with the Brin-Katok local entropy, see~\eqref{eq:BK}.

They show that the neutralized local entropy for $C^{1+\beta}$ diffeomorphism  $f:M \to M$ of a compact closed manifold $M$, calculated with respect to an $f$-invariant probability measure $\mu$, coincides with Brin-Katok local entropy almost everywhere.

In a recent paper \cite{YCZ23}, Yang, Chen \& Zhou, extending the variational principle~\eqref{eq:FH} of Feng \& Huang \cite{FH12}, obtained a variational principle for neutralized entropy of a continuous map $f:X \to X$ on a compact metric space.
For a non-empty compact set $K \subset X$, they prove the equality
$$
\htop^{\widetilde{B}}(f,K)=\lim_{\alpha \to 0}\ \sup \{  \underline{h}_{\mu}^{\widetilde{BK}}(f,\alpha): \mu \text{ \red is a Borel probability measure with }  \mu(K)=1\},
$$
where $\htop^{\widetilde{B}}(f,K)$ denotes a neutralized version of the Bowen topological entropy of $f$ {\red recalled in Section~\ref{sec:toppressure}}. 

and $\underline{h}_{\mu}^{\widetilde{BK}}(f,\alpha)$ is the {\em lower neutralized Brin-Katok local entropy} of $\mu$, defined as
$$
\underline{h}_{\mu}^{\widetilde{BK}}(f,\alpha):=\int_X \liminf _{n \to \infty} \ -\frac1n \log \mu(B_n(x,e^{-n \alpha})) \ d\mu(x).
$$
{\red Although neutralized entropy $\cE_\mu(x)$ uses similar components as translocal entropy $h_\aa(x)$, they are hard to compare because
the former depends on the measure $\mu$ and the latter on the parameter $\aa$.}

{\bf Acknowledgments}
The first author was partially supported by the inner \L\'od\'z University Grant 11/IDUB/DOS/2021. This research was finalized during the visit of the second author to the Jagiellonian
University funded by the program Excellence Initiative – Research University at the Jagiellonian University in Krak\'ow.
{\red Both authors are very grateful for the careful reading and the many due corrections by the anonymous referees.}


\begin{thebibliography}{XYZ}
\bibitem{Ad} R.\ Adler, 
{\em F-expansions revisited. Recent advances in topological dynamics}, (Proc.
Conf. Topological Dynamics, Yale Univ., New Haven, Conn., 1972; in honor of Gustav Arnold Hedlund),  1--5.
Lecture Notes in Math., Vol. 318
Springer-Verlag, Berlin-New York, 1973.

\bibitem{AKM} R.\ Adler, A.\ Konheim, H.\ McAndrew.
{\em Topological entropy,}
Trans.\ Amer.\ Math.\ Soc.\ \textbf{114} (1965) 309--319.

\bibitem{BG06} L.\ Barreira, K.\ Gelfert,
{\em Multifractal analysis for Lyapunov exponents on nonconformal repellers,}
Comm.\ Math.\ Phys.\ \textbf{267} (2006) no. 2, 393--418.


\bibitem{OR24} S.\ Ben Ovadia, F.\ Rodriguez-Hertz,
{\em Neutralized local entropy, and dimension bounds for invariant measures},
Int.\ Math.\ Res.\ Not.\ \textbf{11} (2024) 9469–9481.

\bibitem{Bl92} F.\ Blanchard,
{\em Fully positive topological entropy and topological mixing,}
Symbolic Dynamics and its
Applications, \textbf{135} (Contemporary Mathematics), Amer.\ Math.\ Soc., Providence, RI, (1992) 95--105.

\bibitem{Bl93} F.\ Blanchard,
{\em A disjointness theorem involving topological entropy}, 
Bulletin de la Soci\'et\'e
Math\'ematique de France, \textbf{121} (1993) 465--478.

\bibitem{BH} F.\ Blanchard, W.\ Huang,
{\em Entropy set, weakly mixing sets and entropy capacity,}
 Discrete Contin.\ Dyn.\ Syst.\ \textbf{20} (2008) 275-311.
 
\bibitem{BV19} T.\  Bomfim, P.\ Varandas,
{\em The gluing orbit property, uniform hyperbolicity and large deviations principles for semiflows,}
J.\ Differential Equations \textbf{267} (2019) 228–266.

\bibitem{Bow71} R.\ Bowen,
{\em Periodic points and invariant measures for Axiom A diffeomorphisms,}
Trans.\ Amer.\ Math.\ Soc. {\bf 154} (1971) 377--397.

\bibitem{Bow} R.\ Bowen,
{\em Entropy-expansive maps,}
Trans.\ Amer.\ Math.\ Soc.\ \textbf{164} (1972) 323--331.

\bibitem{Bow79} R.\ Bowen,
{\em Invariant measures for Markov maps of the interval},
Comm.\ Math.\ Phys.\ \textbf{69} (1979) 1--17.

\bibitem{BK83} M.\ Brin, A.\ Katok,
{\em On local entropy,}
Geometric Dynamics, Lecture Notes in Mathematics, \textbf{1007} (1983)  30--38.

\bibitem{Br82} A.\ A. \ Brudno,
{\em Entropy and the complexity of the trajectories of a dynamical system,}
Trudy Moskov.\ Mat.\ Obshch. \textbf{44} (1982) 124-149.

\bibitem{Bruin23} H.\ Bruin,
		{\em Topological and Ergodic Theory of Symbolic Dynamics,}
		Graduate Studies in Mathematics book series of the Amer.\ Math.\ Soc.,
		{\bf 228} (2022) 460 pp.

\bibitem{Buz97} J.\ Buzzi,
{\em Intrinsic ergodicity of smooth interval maps,}
Israel J.\ Math.\ \textbf{100} (1997) 125--161.

\bibitem{BR} J. \ Buzzi, S. \ Ruette,
{\em Large entropy implies existence of a maximal entropy measure for interval maps,}
 Discrete Contin.\ Dyn.\ Sys.\ \textbf{14} (2006) 673--688.

\bibitem{CP} M.\ Carvalho, S.\ P\'erez,
{\em  Weak Gibbs measures and equilibrium states.}
Dyn.\ Sys.: An Intern.\ Journ. {\bf 35:3} (2020) 515--521.


\bibitem{Din70} E.\ I.\ Dinaburg,
{\em The relation between topological entropy and metric enropy,}
Dokl.\ Akad.\ Nauk SSSR., {\bf 190} (1970) 19--22,
Soviet Math.\ Dok.\ {\bf 11} (1970) 13-16.

\bibitem{DYZ06} D.\ Dou, X.\ Ye, G.\ Zhang,
{\em Entropy sequences and maximal entropy sets,}
 Nonlinearity \textbf{19} (2006) 53--74.
 
\bibitem{FHT21} E.\ de Faria, P.\ Hazard, C.\ Tresser,
{\em Genericity of infinite entropy for maps with low regularity,}
Ann.\ Sc.\ Norm.\ Super.\ Pisa Cl.\ Sci.\ \textbf{22} (2021) 601--664.

\bibitem{FH12}  D.\ Feng, W.\ Huang,
{\em Variational principles for topological entropies of subsets,}
J.\ Funct.\ Anal.\ \textbf{263} (2012) 2228--2254.

\bibitem{Fed} H.\ Federer,
{\em Geometric Measure Theory,} Springer-Verlag, New York, 1969.

\bibitem{Ga00} S. \ Galatolo,
{\em Global and local complexity in weakly chaotic dynamical systems,}
Discrete and Continuous Dynamical Systems \textbf{9(6)} (2003) 1607-1624.

\bibitem{GL24} F.\ García-Ramos, H.\ Li,
{\em Local entropy theory and applications,}
Preprint 2024 arXiv:2401.10012

\bibitem{GY09} E.\ Glasner,  X.\ Ye,
{\em Local entropy theory,} 
Ergod.\ Th.\ \& Dynam. Sys. \textbf{29} (2009) 321--356.

\bibitem{Gro07} M.\  Gromov,
{\em Metric structures for Riemannian and non-Riemannian spaces,} English ed., Modern Birkhauser Classics, Birkhauser Boston Inc., Boston, MA, 2007, Based on the 1981 French original, With appendices by M. Katz, P. Pansu and S. Semmes, Translated from the French by Sean Michael Bates, xx+585 pages.

\bibitem{H20} P.\ Hazard,
{\em Maps in dimension one with infinite entropy,}
Ark.\ Mat.\ \textbf{58} (2020) 95--119.

\bibitem{HY06} W.\ Huang, X.\ Ye,
{\em A local variational relation and applications,}
Israel J.\ Math.\ \textbf{151} (2006) 237--280.

\bibitem{Ka73} T.\ Kamae,
{\em Normal numbers and ergodic theory,}
Lect.\ Notes Math., Springer-Verlag, Berlin-New York, \textbf{550} (1976) 253--269.


\bibitem{KOR14} D.\ Kwietniak, P.\ Oprocha, M.\ Rams,
{\em On entropy of dynamical systems with almost specification,}
Israel J.\  Math.\ \textbf{213} (2016) 475--503.


\bibitem{LM} D.\ Lind, B.\ Marcus,
{\em An introduction to symbolic dynamics and coding,}
Cambridge Univ Press, 
1995.

\bibitem{MW} J.\ Ma, Z.\ Wen,
{\em A Billingsley type theorem for Bowen entropy,}
C.\ R.\ Acad. Sci. Paris, Ser. I \textbf{346} (2008) 503--507.

\bibitem{Ma95} P. \ Mattila,
{\em  Geometry of Sets and Measures in Euclidean Spaces,}
 Cambridge Studies in Advanced Mathematics, \textbf{44} Cambridge University Press, Cambridge, 1995.

\bibitem{Mis} M.\ Misiurewicz,
{\em Topological conditional entropy,} Studia Math.\ \textbf{55} (1976) 175-200.

\bibitem{N89} S.\ Newhouse,
{\em Continuity properties of entropy,}
Ann.\ of Math.\  \textbf{129 (2)} (1989) 215-–235.

\bibitem{Pav18} R.\ Pavlov,
{\em On entropy and intrinsic ergodicity of coded subshifts,}
Proc.\ Amer.\ Math.\ Soc.\ {\bf 148} (2020) 4717--4731.


\bibitem{Pes} Y.\ Pesin,
{\em  Dimension Theory in Dynamical Systems,}
Chicago Lectures in Math.
University of Chicago Press, Chicago, IL, (1997) xii+304 pp.

\bibitem{PP84} Y.\ Pesin, B.\  Pitskel,
{\em Topological pressure and the variational principle for noncompact sets,}
Funktsional.\ Anal.\ i Prilozhen.\ \textbf{18} (1984) 50--63.

\bibitem{Rue04} D.\ Ruelle,
{\em Thermodynamic formalism. The mathematical structures of equilibrium statistical mechanics,}
 Second edition. Cambridge Mathematical Library. Cambridge University Press, Cambridge, 2004.

\bibitem{Takens} F.\ Takens,
{\em On the numerical determination of the dimension of an attractor,} 
In: Dynamical systems and bifurcations,
Lect.\ Notes Math.,  Springer, Berlin, \textbf{1125} (1985), 99--106. 
 
\bibitem{TV99} F.\ Takens, E.\ Verbitskiy,
{\em Multifractal analysis of entropies for expansive homeomorphisms with specification,}
Comm.\ Math.\ Phys.\ \textbf{203} (1999) 593-612.

\bibitem{TV03} F.\ Takens, E.\ Verbitskiy,
{\em On the variational principle for the topological entropy of certain non-compact sets,}
Ergod.\ Th.\ \& Dyn.\ Sys.\ \textbf{23}, (2003) no. 1, 317-348.

\bibitem{Theiler} J.\ Theiler,
{\em Lacunary in a best esitimator of fractal dimension,}
Phys.\ Letters A \textbf{133} (1988) 195--200.

\bibitem{T92} P.\ Thieullen,
{\em Entropy and the Hausdorff dimension for infinite-dimensional dynamical systems,}
J.\ Dynam.\ Differential Equations \textbf{4:1} (1992) 127--159.

\bibitem{walters73} P.\ Walters,
{\em Some results on the classification of non-invertible measure preserving transformations,}
Recent advances in topological dynamics (Proc.\ Conf.\ Topological Dynamics, Yale Univ., New Haven,
Conn., 1972; in honor of Gustav Arnold Hedlund)
Lecture Notes in Math., Springer, Berlin, {\bf 318} (1973) 266--276.

\bibitem{WZ25} X.\ Wang, G.\ Zhang,
{\em Smooth surface systems may contain smooth curves which have no measure of maximal entropy,}
Preprint (2025), arXiv:2505.10458

\bibitem{Wh93} H.\ White,
{\em Algorithmic complexity of points in dynamical systems,}
Ergod.\ Th.\ \& Dynam.\ Sys.\ \text{13} (1993) 127-151.

\bibitem{YCZ23} R.\ Yang, E.\ Chen, X.\ Zhou,
{\em Variational principle for neutralized Bowen topological entropy,}
Qual.\ Theory Dyn.\ Syst.\ \textbf{23} (2024), no. 4, Paper No. 162, 15pp.

\bibitem{YZ07}  X.\ Ye, G.\ Zhang,
{\em Entropy points and applications,} Trans.\ Amer.\ Math.\ Soc.\ \textbf{359} (2007) 6167--6186.

\bibitem{Y11} J.\ Yoo,
{\em Measures of maximal relative entropy with full support,}
Ergod. Th.\ \& and Dynam.\ Sys.\ \textbf{31} (2011) 1889--1899.


\end{thebibliography}
\end{document}